\DeclareMathAlphabet{\mathbbold}{U}{bbold}{m}{n}	% nuovo alfabeto per i numeri in bold.
\theoremstyle{plain}
\newtheorem{thm-intro}{Theorem}
\newtheorem{cor-intro}[thm-intro]{Corollary}
\newtheorem*{theorem*}{Theorem}
\newtheorem{theorem}{Theorem}[section]
\newtheorem{prop}[theorem]{Proposition}
\newtheorem{lemma}[theorem]{Lemma}
\theoremstyle{definition}
\newtheorem{definition}[theorem]{Definition}
\theoremstyle{remark}
\newtheorem{rmk}{Remark}[section]
\newtheorem{example}{Example}[section]
\DeclareMathOperator{\spn}{\mathrm{span}}
\DeclareMathOperator{\rank}{\mathrm{rank}}
\DeclareMathOperator{\supp}{supp}
\DeclareMathOperator{\re}{Re}
\DeclareMathOperator{\im}{Im}
\DeclareMathOperator{\dive}{\mathrm{div}}
\newcommand{\N}{\mathbb{N}}					% real numbers
\newcommand{\R}{\mathbb{R}}					% real numbers
\newcommand{\distr}{\mathcal{D}}			% the distribution
\renewcommand{\epsilon}{\varepsilon}		% varepsilon
\newcommand{\op}{H}
\newcommand{\Veff}{V_{\mathrm{eff}}}
\newcommand{\loc}{\mathrm{loc}}
\newcommand{\comp}{\mathrm{comp}}
\DeclareMathOperator{\dom}{Dom}
\title{On the essential self-adjointness of singular sub-Laplacians}
\author[Valentina Franceschi]{Valentina Franceschi$^\dagger$}
\address{$^\dagger$FMJH \& IMO, B\^atiment 307, Facult\'e des Sciences d'Orsay, Universit\'e Paris Sud, Orsay}
\email{\href{mailto:valentina.franceschi@math.u-psud.fr}{valentina.franceschi@math.u-psud.fr}}
\author[Dario Prandi]{Dario Prandi$^\flat$}
\address{$^\flat$ CNRS, Laboratoire des Signaux \& Syst\'emes, CentraleSup\'elec, Gif-sur-Yvette, France}
\email{\href{mailto:dario.prandi@l2s.centralesupelec.fr}{dario.prandi@l2s.centralesupelec.fr}}
\author[Luca Rizzi]{Luca Rizzi$^\sharp$}
\address{$^\sharp$ Univ. Grenoble Alpes, CNRS, Institut Fourier, F-38000 Grenoble, France}
\email{\href{mailto:luca.rizzi@univ-grenoble-alpes.fr}{luca.rizzi@univ-grenoble-alpes.fr}}
\subjclass[2010]{Primary: 47B25, 53C17, 58J60; Secondary: 35Q40, 81Q10}
\keywords{sub-Laplacian, H\"ormander-type operators, singular measure, Popp's measure, quantum confinement.}
\begin{document}

\begin{abstract}
We prove a general essential self-adjointness criterion for sub-Laplacians on complete sub-Riemannian manifolds, defined with respect to singular measures. We also show that, in the compact case, this criterion implies discreteness of the sub-Laplacian spectrum even though the total volume of the manifold is infinite.

As a consequence of our result, the intrinsic sub-Laplacian (i.e.\ defined w.r.t.\ Popp's measure) is essentially self-adjoint on the equiregular connected components of a sub-Riemannian manifold. This settles a conjecture formulated by Boscain and Laurent (Ann.\ Inst.\ Fourier, 2013), under mild regularity assumptions on the singular region, and when the latter does not contain characteristic points.
\end{abstract}

\maketitle

\section{Introduction}
\label{sec:intro}
It is well known that geometric singularities of a Riemannian structure can act as barriers for heat diffusion, wave propagation, and the evolution of quantum particles. Most surprisingly, this occurs even when the underlying Riemannian structure is not complete, and classical particles, whose trajectories are described by geodesics, can escape from the manifold in finite time. One of the simplest cases where this behavior can be observed is the \emph{Grushin structure} given by the singular metric
\begin{equation}
g = dx\otimes dx + \frac{1}{x^2} dy \otimes dy.
\end{equation}
This Riemannian structure on $\R^2 \setminus \{x=0\}$ is clearly not geodesically complete, as almost all geodesics cross the singular region $\mathcal{Z} = \{x=0\}$ in finite time. Moreover, the associated Riemannian volume $\frac{1}{|x|} dx\wedge dy$ explodes on $\mathcal{Z}$ and hence the corresponding Laplace-Beltrami operator presents both a degeneration and a singular drift on $\mathcal Z$:
\begin{equation}
\Delta = \partial_x^2 + x^2 \partial_y^2 - \frac{1}{x} \partial_x.
\end{equation}
It is not hard to show that $\Delta$ with domain $\dom(\Delta)=C^\infty_c(M)$ is essentially self-adjoint on $L^2(M)$, where $M$ is either $\R^2\setminus\mathcal{Z}$ or one of its two connected components. 
As a consequence, by Stone Theorem, there exists a unique unitary Schr\"odinger evolution defined for any initial datum in $L^2(M)$, without the need to impose boundary conditions.
From a physical viewpoint this means that quantum particles are naturally confined to stay into $M$. 
This differs from what happens, for example, in the case of the Euclidean Laplacian on $\R^2\setminus\mathcal Z$. Indeed, this operator is not essentially self-adjoint and its different self-adjoint extensions correspond to different dynamics, e.g.\ to complete reflection or transmission  of quantum particles at $\mathcal Z$, to be chosen depending on the physics of the problem. Similar considerations hold for heat diffusion or wave equations.

The Grushin structure belongs to a class of singular Riemannian structures, called almost-Riemannian structure (ARS), introduced in \cite{ABS-Gauss-Bonnet}. The study of essential self-ad\-joint\-ness of the Laplace-Beltrami operator for ARS has been initiated in \cite{BL-LaplaceBeltrami, anticonic}, for surfaces, and in \cite{quantum-confinement}, for general dimension. In the latter, as a particular instance of a more general criterion, it has been proved that the metric boundary of a non-complete Riemannian manifold can develop a repulsive effect, quantified in terms of an intrinsic invariant called \emph{effective potential}, whose strength can entail the essential self-adjointness of the Laplace-Beltrami operator \cite[Thm.\ 1]{quantum-confinement}.

%\medskip

In this paper we extend the results of \cite{quantum-confinement} to a class of natural (H\"ormander-type) hypoelliptic operators, the \emph{sub-Laplacians}, arising in sub-Riemannian geometry as a generalization of the Riemannian Laplace-Beltrami operator to this setting. 
%To illustrate the results, let us provide a simplified definition of sub-Riemannian structure; more precise statements are in Section~\ref{sec:preliminaries}.

Roughly speaking, a sub-Riemannian structure on a smooth manifold $N$ is defined by a (possibly rank-varying) smooth distribution $\distr\subset TN$ endowed with a scalar product $g:\distr\times\distr\to \R$. (For a precise definition, see Section~\ref{sec:preliminaries}.)
Since the distribution $\distr$ is assumed to satisfy the \emph{Lie bracket generating condition}, any two points in $N$ can be joined by curves a.e.\ tangent to $\distr$, of which the scalar product allows to measure the length.
As in the Riemannian case, by minimizing the length of such curves one can define a distance $d$ on $N$.
Given a measure $\omega$ on $N$, which is smooth outside of some closed set $\mathcal Z\subset N$, the associated sub-Laplacian is the H\"ormander-type operator on $L^2(N,\omega)$ defined by
\begin{equation}
  \Delta_{\omega} = \dive_{\omega} \circ \nabla, \qquad \dom(\Delta_{\omega}) = C^\infty_c(N\setminus \mathcal Z),
\end{equation}
where the divergence is computed with respect to $\omega$, and $\nabla$ is the sub-Riemannian gradient.

It is well known that if $\mathcal Z = \varnothing$ and the sub-Riemannian structure is complete then $\Delta_\omega$ is essentially self-adjoint on $L^2(N,\omega)$ \cite{Strichartz}. Here, we focus on the case of singular measures $\omega$, that is $\mathcal{Z} \neq \varnothing$. In this setting, our main result is the following criterion for essential self-adjointness of sub-Laplacians, that generalizes \cite[Thm.\ 1]{quantum-confinement}. 

\begin{theorem}\label{t:a}
Let $N$ be a complete sub-Riemannian manifold endowed with a measure $\omega$. Assume $\omega$ to be smooth on 
$N\setminus \mathcal{Z}$, where the singular set $\mathcal Z$ is a smooth, embedded, compact hypersurface with no characteristic points.
Assume also that, for some $\varepsilon>0$, there exists a constant $\kappa\geq 0$ such that, letting $\delta = d(\mathcal{Z}, \cdot\,)$, we have
\begin{equation}\label{eq:cond-intro}
\Veff = \left(\frac{\Delta_\omega\delta}{2}\right)^2 + \left(\frac{\Delta_\omega\delta}{2}\right)^\prime \geq \frac{3}{4\delta^2} -\frac{\kappa}{\delta}, \qquad \text{for } 0<\delta \leq \varepsilon,
\end{equation}
where the prime denotes the derivative in the direction of $\nabla\delta$. 
Then, $\Delta_\omega$ with domain $C^\infty_c(M)$ is essentially self-adjoint in $L^2(M)$, where $M = N \setminus \mathcal{Z}$, or any of its connected components.

Moreover, if $M$ is relatively compact, the unique self-adjoint extension of $\Delta_\omega$ has compact resolvent. Therefore, its spectrum is discrete and consists of eigenvalues with finite multiplicity.
\end{theorem}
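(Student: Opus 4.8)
The plan is to reduce the essential self-adjointness of $\Delta_\omega$ to a one-dimensional problem in the distance-to-$\mathcal{Z}$ variable, exploiting the hypothesis that $\mathcal{Z}$ is a compact hypersurface with no characteristic points. First I would use the noncharacteristic assumption to build, in a tubular neighborhood $U_\varepsilon = \{0<\delta\leq\varepsilon\}$ of $\mathcal{Z}$, a set of privileged coordinates in which $\nabla\delta$ is a well-defined unit horizontal vector field and the sub-Laplacian splits as
\begin{equation}
\Delta_\omega = \partial_\delta^2 + (\Delta_\omega\delta)\,\partial_\delta + \Delta_{\mathcal{Z},\delta},
\end{equation}
where $\Delta_{\mathcal{Z},\delta}$ is, for each fixed $\delta$, a nonnegative operator acting "tangentially" along the level sets of $\delta$. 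The key point is that, since $M$ is complete and the only source of incompleteness for the measure is $\mathcal{Z}$, by a partition-of-unity / IMS localization argument it suffices to prove essential self-adjointness near $\mathcal{Z}$ (away from $\mathcal{Z}$, $\Delta_\omega$ is a genuine sub-Laplacian with smooth measure on a complete structure, hence essentially self-adjoint by \cite{Strichartz}).

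**Reduction to a Schrödinger operator on a half-line.** Near $\mathcal{Z}$ I would perform the ground-state-type unitary transformation $u\mapsto \delta^{?}$-weighted change, more precisely conjugating by the density $e^{\frac12\int \Delta_\omega\delta}$ so as to remove the first-order drift term: setting $\rho$ with $\rho'/\rho = \tfrac12\Delta_\omega\delta$ and using the unitary $L^2(U_\varepsilon,\omega)\to L^2(U_\varepsilon,\rho^{-2}\omega)$, the transformed operator becomes $-\partial_\delta^2 + \Veff + (\text{tangential part})$, with $\Veff$ exactly the effective potential of \eqref{eq:cond-intro}. Discarding the nonnegative tangential part (which only helps confinement), the problem reduces to showing that the half-line operator $-\partial_\delta^2 + \Veff(\delta)$ on $L^2((0,\varepsilon])$ is in the limit point case at $\delta=0$. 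Condition \eqref{eq:cond-intro}, $\Veff\geq \tfrac{3}{4\delta^2}-\tfrac{\kappa}{\delta}$, is precisely the classical threshold — strictly above the critical $-\tfrac14\delta^{-2}$ coefficient — ensuring limit point at the origin, via a Hardy-type inequality or the standard ODE analysis of $y''=\tfrac{c}{\delta^2}y$. This is where I would cite or reproduce \cite[Thm.\ 1]{quantum-confinement}: the whole scheme is designed so that the sub-Riemannian case collapses onto the same one-dimensional criterion.

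**Compact resolvent in the relatively compact case.** For the second statement, assume $M$ is relatively compact, so $\overline{M}$ is compact and the only noncompact "direction" of $M$ is the approach to $\mathcal{Z}$, where $\omega$ blows up and the total volume is infinite. I would show compactness of the resolvent by a quadratic-form / Persson-type argument: since $\Delta_\omega$ is essentially self-adjoint, its unique self-adjoint extension $\overline{\Delta_\omega}$ has form domain controlled by $\int_M |\nabla u|^2\,d\omega$; away from $\mathcal{Z}$ the embedding of the form domain into $L^2$ is compact by Rellich (compactness of $\overline{M\setminus U_\varepsilon}$ and hypoellipticity/subelliptic estimates), so it remains to control the tail near $\mathcal{Z}$. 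There, the effective-potential bound $\Veff\geq \tfrac{3}{4\delta^2}-\tfrac{\kappa}{\delta}$ upgrades to a Hardy inequality $\int |\nabla u|^2 \gtrsim \int \tfrac{1}{\delta^2}|u|^2$ for $u$ supported near $\mathcal{Z}$, which forces $\inf\spec_{\mathrm{ess}}(\overline{\Delta_\omega}) = +\infty$ — equivalently, any sequence bounded in the form norm and concentrating at $\mathcal{Z}$ converges to $0$ in $L^2$. Combining the two regions via IMS localization gives a compact resolvent, hence discrete spectrum with finite multiplicities.

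**Main obstacle.** The hard part will be the first step: constructing good coordinates near a noncharacteristic hypersurface $\mathcal{Z}$ in a rank-varying sub-Riemannian manifold and showing that $\delta=d(\mathcal{Z},\cdot)$ is smooth on a punctured neighborhood with $\nabla\delta$ a unit field, so that the clean splitting of $\Delta_\omega$ above is legitimate and $\Veff$ is well-defined and smooth. The noncharacteristic hypothesis is exactly what guarantees $\distr$ is transverse to $\mathcal{Z}$ and the normal sub-Riemannian geodesics fibering the tube are well-behaved, but verifying the regularity of $\delta$ and the precise form of the drift term $\Delta_\omega\delta$ — including that the tangential remainder is genuinely nonnegative and can be discarded — requires care and is the technical core; everything downstream is a reduction to the already-established one-dimensional criterion of \cite{quantum-confinement}.
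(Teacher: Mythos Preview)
Your overall architecture matches the paper's closely: the tubular neighborhood of $\mathcal Z$ (which you correctly flag as the hard part, and which the paper establishes as Proposition~\ref{prop:smoothness}), the unitary conjugation producing $\Veff$, a Hardy-type inequality, and the Rellich--Kondrachov argument for compact resolvent are all present. Two steps in the essential-self-adjointness part, however, do not go through as written.

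First, ``by IMS localization it suffices to prove essential self-adjointness near $\mathcal Z$'' is not a valid reduction: essential self-adjointness does not localize via partitions of unity, and Strichartz away from $\mathcal Z$ does not combine with near-$\mathcal Z$ information in the way you suggest. Second, the reduction to the limit-point case of a half-line operator $-\partial_\delta^2+\Veff(\delta)$ is only heuristic, since $\Veff$ depends on the tangential variable as well as on $\delta$ and there is no genuine separation of variables. What the paper actually does is: (i) use the pointwise bound $|\nabla u|^2\ge|\partial_t u|^2$ together with the one-dimensional Hardy inequality and your conjugation to derive a \emph{global} weak Hardy inequality $\mathcal E(u)\ge\int_{M_\eta}(\delta^{-2}-\kappa\delta^{-1})|u|^2\,d\omega+c\|u\|^2$ on $W^1_\comp(M)$ (IMS is used here, but to globalize the \emph{inequality}, not self-adjointness); (ii) conclude semiboundedness; (iii) prove an Agmon-type estimate (Proposition~\ref{p:Agmon}) showing that any $\psi\in\ker(H^*-E)$ with $E<c$ vanishes, by testing against Lipschitz cutoffs built from the solution of an ODE tied to the Hardy weight. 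Step~(iii) is the piece genuinely missing from your outline. A further sub-Riemannian-specific ingredient you gloss over is that applying the key identity to eigenfunctions of $H^*$ requires $\dom(H^*)\subset W^1_{\loc}(M)$, which in the absence of ellipticity comes from Rothschild--Stein subelliptic regularity (Lemma~\ref{l:shubin}).
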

\begin{rmk}
The compactness of $\mathcal{Z}$ in Theorem~\ref{t:a} is not necessary, and it can be replaced by the weaker assumption that the (normal) injectivity radius from $\mathcal{Z}$ is strictly positive.
\end{rmk}

We stress that, although the blueprint for the proof of Theorem~\ref{t:a} follows the idea of \cite{quantum-confinement}, it is not a straightforward adaptation. Indeed, the new aspects of the proof are the exploitation of subellipticity to obtain regularity properties of weak solutions (Lemma \ref{l:shubin}), the sub-Riemannian version of the Rellich-Kondrachov theorem (Lemma \ref{l:RK}), and a sub-Riemannian tubular neighborhood theorem for smooth hypersurfaces with no characteristic points (Proposition~\ref{prop:smoothness}). { We believe that these results are interesting on their own. In particular, up to our knowledge, Proposition~\ref{prop:smoothness} is the first tubular neighborhood result holding for general sub-Riemannian manifolds. See, e.g., \cite{R17, AFM17}, where results of this type are proved for (possibly higher codimensional) submanifolds in some Carnot groups.}

A particularly interesting case, is the one where the measure $\omega$ is chosen to be the \emph{Popp's measure} $\mathcal P$. This is a measure canonically associated with the sub-Riemannian structure, which is smooth where the structure is equiregular \cite{montgomerybook,BarilariRizzi13}. In this case, the singular region $\mathcal{Z}$ coincides with the singular region of the sub-Riemannian structure, i.e., the complement of the equiregular region. We refer to the sub-Laplacian $\Delta_{\mathcal P}$ associated with $\mathcal P$ as the intrinsic (or Popp) sub-Laplacian. 

Consider for example the Martinet structure on $N = \R^3$, whose distribution and metric are defined by the orthonormal vector fields
\begin{equation}
X_1 = \partial_y + {x^2}\partial_z, \qquad X_2 = \partial_x.
\end{equation}
The distribution $\distr = \spn\{X_1,X_2\}$ is then equiregular everywhere except on the hypersurface $\mathcal{Z} = \{ x =0 \}$, where Popp's measure is singular. Indeed,
\begin{equation}
\mathcal{P} = \frac{1}{2\sqrt{2}|x|} dx \wedge dy \wedge dz.
\end{equation}
In this case, $\Delta_{\mathcal{P}}$ with domain $C^\infty_c(N\setminus\mathcal Z)$ is essentially self-adjoint. This fact has been proved in \cite[Thm.\ 3]{BL-LaplaceBeltrami} for a compactified version of the Martinet structure on $\R\times \mathbb{S}^1 \times \mathbb{S}^1$, using a Fourier decomposition w.r.t.\ the compact singular region $\mathcal Z \simeq \mathbb S^1\times \mathbb S^1$. 

This result has driven the authors to conjecture that the loss of equiregularity acts as a general barrier for quantum diffusion, i.e., more precisely, that the intrinsic sub-Laplacian, when restricted to the equiregular region of a sub-Riemannian manifold, is essentially self-adjoint. As a consequence of Theorem~\ref{t:a}, we prove this conjecture under mild regularity assumptions on the sub-Riemannian structure (Popp-regularity, see Section~\ref{sec:examples}).

\begin{theorem}\label{t:b}
Let $N$ be a complete and Popp-regular sub-Riemannian manifold, with compact singular set $\mathcal Z$. Then, the sub-Laplacian $\Delta_\mathcal P$ with domain $C_c^\infty(M)$ is essentially self-adjoint in $L^2(M)$, where $M=N\setminus \mathcal Z$ or one of its connected components. Moreover, if $M$ is relatively compact, the unique self-adjoint extension of $\Delta_\mathcal P$ has compact resolvent.
\end{theorem}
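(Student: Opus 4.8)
The plan is to derive Theorem~\ref{t:b} from Theorem~\ref{t:a} by verifying that the intrinsic sub-Laplacian $\Delta_\mathcal P$ satisfies the hypotheses of the latter, in particular the effective potential bound~\eqref{eq:cond-intro}. The key point is that the geometry near $\mathcal Z$ must be understood well enough to compute $\Delta_\mathcal P \delta$ to leading order as $\delta \to 0$, where $\delta = d(\mathcal Z, \cdot\,)$.

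First I would invoke the sub-Riemannian tubular neighborhood theorem (Proposition~\ref{prop:smoothness}) together with the Popp-regularity assumption: since $\mathcal Z$ is a smooth compact hypersurface with no characteristic points (which is part of the definition of Popp-regularity, or follows from it — this should be checked against Section~\ref{sec:examples}), the distance function $\delta$ is smooth on a punctured tubular neighborhood $\{0 < \delta \le \varepsilon\}$ of $\mathcal Z$, and Fermi-type coordinates $(\delta, z)$ adapted to $\mathcal Z$ are available. In these coordinates Popp's measure has the form $\mathcal P = \frac{1}{\delta^{\alpha}} h(\delta, z)\, d\delta\, dz$ for some smooth positive $h$ and some exponent $\alpha > 0$ determined by the growth vector along $\mathcal Z$; the Popp-regularity hypothesis is precisely what guarantees this asymptotic structure (and in the Martinet example above $\alpha = 1$). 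Then $\Delta_\mathcal P \delta = \dive_\mathcal P(\nabla \delta)$, and since $|\nabla\delta| = 1$ and $\nabla\delta = \partial_\delta$ in these coordinates, a direct computation gives
\begin{equation}
\Delta_\mathcal P \delta = \partial_\delta \log\!\left(\frac{h(\delta,z)}{\delta^{\alpha}}\right) = -\frac{\alpha}{\delta} + O(1), \qquad \delta \to 0.
\end{equation}

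Next I would plug this expansion into the definition of $\Veff$. Writing $\frac{\Delta_\mathcal P \delta}{2} = -\frac{\alpha}{2\delta} + O(1)$, one gets
\begin{equation}
\Veff = \left(\frac{\Delta_\mathcal P\delta}{2}\right)^2 + \left(\frac{\Delta_\mathcal P\delta}{2}\right)' = \frac{\alpha^2}{4\delta^2} + \frac{\alpha}{2\delta^2} + O\!\left(\frac{1}{\delta}\right) = \frac{\alpha^2 + 2\alpha}{4\delta^2} + O\!\left(\frac{1}{\delta}\right).
\end{equation}
The condition~\eqref{eq:cond-intro} requires the coefficient of $\delta^{-2}$ to be at least $\frac34$, i.e.\ $\alpha^2 + 2\alpha \ge 3$, i.e.\ $\alpha \ge 1$. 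So the crux is to show that the Popp exponent $\alpha$ along a smooth non-characteristic singular hypersurface is always $\ge 1$; in fact, loss of equiregularity across a hypersurface forces the growth vector to jump, which makes the Popp density blow up at least like $1/\delta$, giving $\alpha \ge 1$. Once $\alpha \ge 1$ is established, any $O(1/\delta)$ error is absorbed into the $-\kappa/\delta$ term for a suitable $\kappa$, and~\eqref{eq:cond-intro} holds; completeness of $N$ is assumed, so Theorem~\ref{t:a} applies and yields essential self-adjointness, as well as the compact resolvent statement when $M$ is relatively compact.

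The main obstacle I anticipate is the control of the Popp density near $\mathcal Z$: establishing the precise form $\mathcal P = \delta^{-\alpha} h(\delta,z)\,d\delta\,dz$ with smooth positive $h$ and, crucially, the lower bound $\alpha \ge 1$. This requires a careful analysis of how the flag of the distribution degenerates along $\mathcal Z$ — i.e.\ understanding the stratification of the singular set and the behaviour of Popp's measure there — which is exactly what the notion of Popp-regularity (to be introduced in Section~\ref{sec:examples}) is designed to encode. Everything else is a direct substitution into Theorem~\ref{t:a}, using Proposition~\ref{prop:smoothness} to justify the smoothness of $\delta$ and of the relevant coordinate expressions on $\{0 < \delta \le \varepsilon\}$.
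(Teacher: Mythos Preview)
Your approach is essentially the paper's: pass to tubular coordinates where $\delta=t$ and $\nabla\delta=\partial_t$, write the Popp density as $t^{-\alpha}$ times a smooth non-vanishing factor, compute $\Veff=\frac{\alpha(\alpha+2)}{4t^2}+O(1/t)$, and invoke Theorem~\ref{t:a} once $\alpha\ge 1$ and compactness of $\mathcal Z$ give a uniform lower bound.

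The one point where you diverge from the paper is the justification of $\alpha\ge 1$. You propose to \emph{derive} it from the jump of the growth vector across $\mathcal Z$. The paper instead builds it into the definition: Popp-regularity (Definition~\ref{def:popp_reg}) requires that the reciprocal density $\rho=(d\mathcal P/d\varpi)^{-1}$ be locally of the form $\psi^k$ for a smooth submersion $\psi$ and an \emph{integer} $k\in\mathbb N$. Since $\rho$ vanishes on $\mathcal Z$ and $\psi$ is a submersion, one has $\partial_t\psi\neq 0$ on $\mathcal Z$, hence $\rho=t^k\phi$ with $\phi$ smooth and non-vanishing; thus $\alpha=k\ge 1$ is automatic and integral, and the smoothness/positivity of your $h$ comes for free. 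No separate argument about the flag is needed, and indeed without the regularity hypothesis such an argument can fail (the paper exhibits non-Popp-regular examples where $\Veff$ does not satisfy the required bound). So your ``main obstacle'' dissolves once you read the definition; the rest of your outline matches the paper's proof.
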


\begin{rmk}
 The Popp sub-Laplacian is not the only intrinsic second order diffusion operator associated with a sub-Riemannian structure. See for example \cite{Random1,Random2,Random3} for intrinsic operators associated with sub-Riemannian random walks. Other possible sub-Laplacians are related with different choices of intrinsic measures. For example, one might consider the Hausdorff measure $\mathcal H$ associated with the metric structure. However, with the exception of some low dimensional cases, it is unknown whether $\mathcal H$ is even $C^1$, see \cite{ABB12}. Since Theorem~\ref{t:a} requires smooth measures (actually, $C^2$ is sufficient) we have restricted our attention to the Popp measure.
\end{rmk}

\subsection{Structure of the paper}
The necessary preliminaries of  sub-Riemannian geometry are discussed in Section~\ref{sec:preliminaries}. Section~\ref{sec:distance-singular-region} is devoted to the proof of regularity properties of the distance function from the singular region. In Section~\ref{sec:main-criterion} we prove Theorems~\ref{t:a} and~\ref{t:b}. In Section~\ref{sec:examples} we discuss the case of the intrinsic sub-Laplacian. We close the paper with examples of non-Popp-regular structures where Theorem~\ref{t:b} does not apply, but Theorem~\ref{t:a} does, and hence the intrinsic sub-Laplacian is essentially self-adjoint. We also provide examples where both results do not apply, and we are not able to determine whether the sub-Laplacian is essentially self-adjoint.

\section{Preliminaries on sub-Riemannian geometry}
\label{sec:preliminaries}
\begin{definition}
Let $N$ be a connected smooth manifold. A \emph{sub-Riemannian structure} on $N$ is a triple $\big(U,\xi,(\cdot|\cdot)_q\big)$, where
\begin{itemize}
\item $\pi_U:U\to N$ is an Euclidean bundle  with base $N$ and Euclidean fiber $U_q=\pi^{-1}(q)$, in particular for every $q \in N$, $U_q$ is a vector space equipped with a scalar product $(\cdot | \cdot)_q$, smooth with respect to $q$. 
\item $\xi : U \to TN$ is a vector bundle morphism, i.e., $\xi$ is a fiber-wise linear map such that, letting $\pi : T N \to N$ be the canonical projection, the following diagram commutes:
%\begin{equation}
%\begin{array}{ccc}
%U&\stackrel{\xi}{\longrightarrow}&TN\\
%\pi_U\searrow&&\swarrow\pi\\
%&N
%\end{array}
%\end{equation}
\begin{equation*}
\xymatrix{
U \ar[dr]_{\pi_{U}} \ar[r]^{\xi}
& TN \ar[d]^{\pi} \\
 & N }
\end{equation*}
\item The \emph{Lie bracket generating condition} holds true, i.e.,
\begin{equation}
\label{eq:bracketc}
\left.\mathrm{Lie}(\xi(\Gamma(U)))\right|_q=T_qN, \qquad \forall q\in N,
\end{equation}
where $\Gamma(U)$ denotes the $C^{\infty}(N)$-module of smooth sections of $U$ and $\left.\mathrm{Lie}(\xi(\Gamma(U)))\right|_q$ denotes the smallest Lie algebra containing $\xi(\Gamma(U))\subseteq\Gamma(TN)$, evaluated at $q$.
\end{itemize}
The subspace of \emph{horizontal directions} at $q\in N$ is $\distr_q=\xi(U_q)\subseteq T_qN$ and the set of \emph{horizontal vector fields} is $\Gamma(\mathcal D)=\xi(\Gamma(U))$.
\end{definition}

Consider a local frame for $U$, i.e., a set $\{\sigma_1,\ldots,\sigma_m\}$, with $m=\rank(U)$, of smooth local sections of $U$, defined on some neighborhood $\mathcal{O} \subseteq N$, and  which are orthonormal with respect to the scalar product on $U$. The vector fields $X_i:= \xi \circ \sigma_i$ constitute a \emph{local generating family}. On $\mathcal O$, condition \eqref{eq:bracketc} reads
\begin{equation}
\left.\mathrm{Lie}(X_1,\dots, X_m)\right|_{q}=T_qN,\qquad \forall q\in\mathcal O.
\end{equation}

Let $r(q)=\dim(\mathcal D_q)$ be the \emph{rank} of the distribution at $q \in N$. Moreover, for $k\in\N$, let
\begin{equation}
\mathcal D^k_q:=\spn\{[X_1,\dots,[X_{j-1},X_j]]_q : X_i\in\Gamma(\mathcal D),\ j\leq k\}.
\end{equation}
By \eqref{eq:bracketc}, we call the \emph{step} of the sub-Riemannian structure at $q$ the minimal integer $s=s(q)\in\N$ such that
%\begin{equation}
%\left.\mathrm{Lie}^s(\mathcal D)\right|_q=T_qN.
$\mathcal D^s_q=T_qN$.
%\end{equation} 
\begin{definition} 
Let $A\subseteq N$. We say that a sub-Riemannian structure on $N$ is \emph{equiregular} on $A$ if $\dim(\mathcal D^k_q)$ is constant for $q\in A$ and for any $k\in\N$.
\end{definition}
Notice that even $r(q)=\dim(\mathcal D^1_q)$ can be non-constant. 
For instance, this is the case of almost-Riemannian manifolds, where  there exists a closed set $\mathcal Z\subset N$ such that $\dim(\mathcal D^1_q)=\dim N$ for every $q\in N\setminus \mathcal Z$.

\medskip

In this paper, $N$ is a smooth manifold without boundary, endowed with a sub-Rie\-mann\-ian structure. Moreover, we let $\mathcal Z\subset N$ be a set satisfying
\begin{equation}
\label{eq:singular_set}
\tag{H0}
\mathcal Z\subseteq N\text{ is a smooth, embedded hypersurface.}
\end{equation}
The set $\mathcal Z$ will be called the \emph{singular region} when defined in association with a measure $\omega$ on $N$, smooth on $N\setminus \mathcal Z$.

\begin{definition}
Let $\mathcal Z\subseteq N$ be  a smooth embedded hypersurface. We say that $q\in\mathcal Z$ is a  \emph{characteristic (or tangency) point} if $\mathcal D_q\subseteq T_q\mathcal Z$.
\end{definition}
\noindent We will also assume that:
\begin{equation}
\label{eq:no_characteristic}
\tag{H1} 
\text{The singular region $\mathcal Z$ does not contain characteristic points.}
\end{equation}

Assumption \eqref{eq:no_characteristic} implies that there are no abnormal minimizers between $p\in N\setminus\mathcal Z$ and $\mathcal Z$ (see Proposition \ref{prop:no_abn}). However, we do not exclude the presence of other abnormal minimizers. (See \cite{nostrolibro} for a definition of abnormal minimizers.)

\subsection{Metric structure} 
Let $q\in N$ and $v\in \mathcal D_q$. We define the \emph{sub-Riemannian norm} as 
\begin{equation}
|v|^2=\inf\{(u|u)_q : u\in U_q,\ \xi(u)=v\}.
\end{equation}
One can check that the above norm satisfies the parallelogram law, and hence it is defined by a scalar product on $\mathcal D_q$, denoted with the symbol $g_q$.
 
An {\em horizontal curve} is an absolutely continuous curve $\gamma:[0,1]\to N$ such that there exists an $L^1$ curve $\eta : [0,1] \to U$ satisfying $\pi_U(\eta) = \gamma$ and
\begin{equation}
\dot\gamma(t) = \xi(\eta(t)), \qquad \text{for a.e. } t \in [0,1].
\end{equation}
In particular, $\dot\gamma(t) \in \distr_{\gamma(t)}$ for a.e.\ $t \in [0,1]$. In this case, we define the \emph{length} of $\gamma$ as
\begin{equation}
\ell(\gamma)=\int_0^1 |\dot\gamma(t)|\;dt.
\end{equation}
Since $\ell$ is invariant by reparametrization of $\gamma$, when dealing with minimization of length we consider only intervals of the form $[0,1]$.
We define the \emph{sub-Riemannian distance} as
\begin{equation}
d(p,q):=\inf\{\ell(\gamma) : \gamma\text{ is horizontal, } \gamma(0)=p,\ \gamma(1)=q\}.
\end{equation}
Under the bracket-generating condition \eqref{eq:bracketc}, the Chow-Rashevskii Theorem implies that any couple of points $p,q\in N$ can be connected by means of horizontal curves. That is, $d:N\times N\to\R$ is finite. Moreover, $d$ is a continuous map and the metric space $(N,d)$ has the same topology as $N$.

\begin{definition}
The sub-Riemannian (or horizontal) gradient of a smooth function $f$ is the smooth vector field $\nabla f \in \Gamma(\distr)$ such that
\begin{equation}
\label{eq:gradient}
g(\nabla f,W)= d f (W), \qquad \forall W \in \Gamma(\distr).
\end{equation}
\end{definition}

\begin{rmk}
In terms of a local generating family $X_1,\ldots,X_r$ for the sub-Riemannian structure, we have
\begin{equation}
\label{eq:grad}
\nabla f = \sum_{i=1}^r X_i(f) X_i, \qquad |\nabla f |_{}^2 = \sum_{i=1}^r X_i(f)^2.
\end{equation}
Formula \eqref{eq:grad} holds also if $X_1,\ldots,X_r$ are not independent, in particular it holds on $\mathcal Z$.
\end{rmk}

\subsubsection{Sub-Laplacians}
Let $\omega$ be a measure on $N$, smooth and positive on $N \setminus \mathcal{Z}$. The sub-Laplacian $\Delta_\omega$ is the operator
\begin{equation}
\Delta_\omega u := \dive_\omega (\nabla u), \qquad \forall u \in C^\infty_c(N \setminus \mathcal{Z}),
\end{equation}
where the divergence $\dive_\omega$ is computed with respect to the measure $\omega$, and $\nabla$ is the sub-Riemannian gradient. Equivalently, $\Delta_\omega$ can be defined as the operator associated with the quadratic form \begin{equation}
\mathcal{E}(u,v) := \int_M g(\nabla u, \nabla \bar{v})\, d\omega, \qquad \forall u,v \in C^\infty_c(N\setminus \mathcal{Z}).
\end{equation}
In terms of a local generating family of vector fields $X_1,\dots,X_r \subset \Gamma(\distr)$, we have
\begin{equation}\label{eq:sublaplc}
\Delta_{\omega} = \sum_{i=1}^k X_i^2 + \dive_{\omega} (X_i)X_i.
\end{equation}
As a consequence of the Lie bracket generating assumption, $\Delta_\omega$ is hypoelliptic \cite{Hormander}. Finally, it is well-known that if $\mathcal Z = \varnothing$ and the sub-Riemannian structure is complete then $\Delta_\omega$ is essentially self-adjoint on $L^2(N)$ \cite{Strichartz}.

\subsubsection{Geodesics and Hamiltonian flow}

We recall basic notions on minimizing curves in sub-Riemannian geometry. A \emph{geodesic} is a horizontal curve $\gamma :[0,1] \to N$ that locally minimizes the length between its endpoints, and is parametrized by constant speed.
\begin{definition}
The  \emph{sub-Riemannian Hamiltonian} is the smooth function $H : T^*N \to \R$,
\begin{equation}
\label{eq:Hamiltonian}
H(\lambda) := \frac{1}{2}\sum_{i=1}^r \langle \lambda, X_i \rangle^2, \qquad \lambda \in T^*N,
\end{equation}
where $X_1,\ldots,X_r$ is a local generating family for the sub-Riemannian structure, and $\langle \lambda, \cdot \rangle $ denotes the action of covectors on vectors. 
Associated with $H$ we define the \emph{Hamiltonian vector field} $\vec H$ on $T^*N$ as  $\vec H: C^\infty(T^*N)\to C^\infty(T^*N)$ such that $\sigma(\cdot,\vec H)=dH$. Here, $\sigma\in\Lambda^2(T^*N)$ is the canonical symplectic form on $T^*N$.
\end{definition}
Solutions $\lambda : [0,1] \to T^*N$ of \emph{Hamilton equations}
\begin{equation}\label{eq:Hamiltoneqs}
\dot{\lambda}(t) = \vec{H}(\lambda(t))
\end{equation}
are called \emph{normal extremals}. Their projections $\gamma(t) := \pi(\lambda(t))$ on $N$, where $\pi:T^*N\to N$ is the canonical projection, are locally minimizing curves parametrized by constant speed, and are called \emph{normal geodesics}.
It is easy to show that if $\lambda(t)$ is a normal extremal, and $\gamma(t) = \pi(\lambda(t))$ is the corresponding normal geodesic, then
\begin{equation}
\dot{\gamma}(t) = \sum_{i=1}^r \langle\lambda(t),X_i(\gamma(t))\rangle X_i(\gamma(t)),
\end{equation}
and its speed is given by $| \dot\gamma | = \sqrt{2H(\lambda)}$. In particular 
\begin{equation}
\label{eq:speed}
\ell(\gamma|_{[0,t]}) = t \sqrt{2H(\lambda(0))}\quad \forall t\in[0,1].
\end{equation} 
\begin{definition}
The \emph{exponential map} $\exp_{q} : \distr_q \to N$, with base $q \in N$ is
\begin{equation}
\exp_q (\lambda) := \pi \circ e^{\vec{H}}(\lambda), \qquad \lambda \in \distr_q,
\end{equation}
where $\distr_q \subseteq T_q^* N$ is the set of covectors such that the solution $t \mapsto e^{t \vec{H}}(\lambda)$ of~\eqref{eq:Hamiltoneqs} with initial datum $\lambda$ is well defined up to time $1$.
\end{definition}
We say that a sub-Riemannian structure on $N$ is complete if $(N,d)$ is a complete metric space. In a complete sub-Riemannian structure, the sub-Riemannian version of Hopf-Rinow theorem implies that $\distr_q=T_q^*N$ for every $q\in N$.

%The presence of the so-called \emph{abnormal} geodesics must be taken in account. 
There is another class of minimizing curves in sub-Riemannian geometry, called \emph{abnormal minimizers}. These curves can still be lifted to extremal curves $\lambda(t)$ on $T^*N$, but which may not follow the Hamiltonian dynamic of \eqref{eq:Hamiltoneqs}. 
Here we only observe that an extremal $\lambda(t)\in T^*N$ is abnormal if and only if it satisfies:
\begin{equation}
\label{eq:abn}
\langle \lambda(t),\distr_{\pi(\lambda(t))}\rangle=0\qquad \forall t\in[0,1] 
\end{equation}
with $\lambda(t)\neq 0$ for any $t\in[0,1]$ (see \cite[Thm 3.53]{nostrolibro}), that is $H(\lambda(t))\equiv 0$.
Notice also that a curve may be abnormal and normal at the same time.

\begin{prop}\label{prop:no_abn}
Consider a sub-Riemannian structure on a smooth manifold $N$. Let $\mathcal{Z} \subset N$ be a closed embedded hypersurface. Let $\gamma:[0,1]\to N$ be a minimizer such that $\gamma(0)\in \mathcal Z$, $\gamma(1)=p\in N\setminus \mathcal{Z}$ and
\begin{equation}
\ell(\gamma)=\inf\{d(q,p),\ q\in \mathcal Z\}.
\end{equation}
Then $\gamma(0)\in \mathcal Z$ is a characteristic point if and only if $\gamma$ is abnormal.
\end{prop}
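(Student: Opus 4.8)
The plan is to prove the equivalence ``$\gamma(0)$ characteristic $\iff$ $\gamma$ abnormal'' by exploiting the first-order optimality conditions for the constrained minimization problem $\min\{d(q,p) : q\in\mathcal Z\}$, which here plays the role of a ``free boundary'' condition at the endpoint $\gamma(0)$. First I would recall that a length-minimizer $\gamma$ joining $\mathcal Z$ to $p\in N\setminus\mathcal Z$ and realizing $\inf\{d(q,p):q\in\mathcal Z\}$ admits a lift $\lambda:[0,1]\to T^*N$ which is either normal (solving Hamilton's equations \eqref{eq:Hamiltoneqs}) or abnormal (satisfying \eqref{eq:abn}), possibly both; and in either case the Pontryagin Maximum Principle for problems with variable initial point on a submanifold yields the \emph{transversality condition} at $t=0$:
\begin{equation*}
\langle \lambda(0), T_{\gamma(0)}\mathcal Z\rangle = 0,
\end{equation*}
i.e.\ $\lambda(0)$ annihilates the tangent space to $\mathcal Z$ at the initial point. (At $t=1$ the endpoint is fixed, so no condition there; and $\lambda(0)\neq 0$ since $\gamma$ is a nonconstant minimizer.)

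Next I would analyze the two implications separately using this transversality condition together with the dichotomy between normal and abnormal extremals. Suppose $\gamma(0)$ is a characteristic point, i.e.\ $\distr_{\gamma(0)}\subseteq T_{\gamma(0)}\mathcal Z$. Then from transversality, $\langle\lambda(0),\distr_{\gamma(0)}\rangle \subseteq \langle\lambda(0), T_{\gamma(0)}\mathcal Z\rangle = 0$, so $H(\lambda(0)) = \frac12\sum_i\langle\lambda(0),X_i(\gamma(0))\rangle^2 = 0$. Since $H$ is a constant of the motion along normal extremals (and is identically zero along abnormals), and since $\gamma$ is nonconstant, $\gamma$ cannot be purely normal with this initial covector — if it were normal, \eqref{eq:speed} would force $\ell(\gamma)=\sqrt{2H(\lambda(0))}=0$, a contradiction. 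Hence the minimizer $\gamma$ must be abnormal (its lift, whatever it is, satisfies $H\equiv 0$, which by \eqref{eq:abn} and the characterization quoted in the excerpt is exactly abnormality). Conversely, suppose $\gamma$ is abnormal, so by \eqref{eq:abn} we have $\langle\lambda(t),\distr_{\gamma(t)}\rangle = 0$ for all $t$, in particular at $t=0$: $\langle\lambda(0),\distr_{\gamma(0)}\rangle = 0$, with $\lambda(0)\neq 0$. Combined with the transversality condition $\langle\lambda(0),T_{\gamma(0)}\mathcal Z\rangle = 0$, the nonzero covector $\lambda(0)$ annihilates $\distr_{\gamma(0)} + T_{\gamma(0)}\mathcal Z$; since $T_{\gamma(0)}\mathcal Z$ is a hyperplane, either this sum is all of $T_{\gamma(0)}N$ (impossible, as then $\lambda(0)=0$) or $\distr_{\gamma(0)}\subseteq T_{\gamma(0)}\mathcal Z$, i.e.\ $\gamma(0)$ is characteristic.

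The main obstacle I anticipate is making the transversality condition at the free endpoint rigorous in the sub-Riemannian setting with possibly rank-varying distribution: one needs the version of the Pontryagin Maximum Principle for optimal control problems with the initial state constrained to a submanifold $\mathcal Z$, producing a nontrivial adjoint covector $\lambda(0)\perp T_{\gamma(0)}\mathcal Z$ that is either a normal or an abnormal lift of $\gamma$. This is classical (see, e.g., the treatment of PMP with boundary conditions in \cite{nostrolibro}), but it must be invoked carefully because $\gamma$ could a priori be of abnormal type, and the dichotomy ``normal or abnormal'' for the constrained problem, together with $\lambda(0)\neq 0$, is what drives the whole argument. Everything else — the computation $H(\lambda(0))=0$ under characteristicity, the use of \eqref{eq:speed} to exclude the normal case, and the linear-algebra step with the hyperplane $T_{\gamma(0)}\mathcal Z$ — is elementary once the correct optimality conditions are in hand.
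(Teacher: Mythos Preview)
Your proposal is correct and follows essentially the same route as the paper's proof: both invoke the Pontryagin Maximum Principle with free initial point on $\mathcal{Z}$ to obtain an extremal lift satisfying the transversality condition $\lambda(0)\perp T_{\gamma(0)}\mathcal{Z}$, then use the dichotomy normal/abnormal together with the hyperplane condition on $T_{\gamma(0)}\mathcal{Z}$ to conclude. Your write-up is slightly more explicit (invoking \eqref{eq:speed} to rule out the normal case and spelling out the linear-algebra step $\distr_{\gamma(0)}+T_{\gamma(0)}\mathcal{Z}\subsetneq T_{\gamma(0)}N$), but the argument is the same as the paper's, which cites \cite[Thm.~12.4]{AS-GeometricControl} for the transversality condition.
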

\begin{proof}
By assumption, there exists an extremal $t\in[0,1]\mapsto\lambda(t)$ such that $\gamma(t)=\pi(\lambda(t))$. On the other hand, $\gamma$ minimizes also the distance from $\mathcal Z$, hence, by \cite[Thm 12.4]{AS-GeometricControl} the following  transversality condition holds true:
\begin{equation}\label{eq:trans}
\langle \lambda(0), v\rangle=0,\qquad \forall v\in T_{\gamma(0)}\mathcal Z.
\end{equation}
If $\gamma(0)$ is a characteristic point, the above implies $H(\lambda(0))=0$. Hence, $\lambda(t)$ cannot be normal, and is thus abnormal. On the other hand, if $\lambda(t)$ is abnormal, it satisfies $H(\lambda(0))=0$, that is \eqref{eq:abn}. We deduce that $\gamma(0)$ is a characteristic point. In fact, if 
$\distr_{\gamma(0)}$ were transversal to $T_{\gamma(0)}\mathcal Z$,
\eqref{eq:trans} and \eqref{eq:abn} 
would imply $\lambda(0)= 0$ yielding a contradiction.
\end{proof}

\subsection{Popp's measure}\label{ss:popp}

 On equiregular neighborhoods of a sub-Riemannian manifold, it is possible to define an intrinsic smooth measure $\mathcal P$, called Popp's measure.
 This measure was introduced first in \cite{montgomerybook} and then used in \cite{ABGR2009} to define an intrinsic sub-Laplacian in the sub-Riemannian setting. In the following, we recall the explicit formula for Popp's measure given in \cite{BarilariRizzi13} in terms of adapted frames, which will be used in Section~\ref{sec:examples}. 

Let $\mathcal O\subseteq N$ be an equiregular neighborhood of an $n$-dimensional sub-Riemannian manifold $N$. A local frame $X_1,\dots,X_n$ on $\mathcal{O}$ is said to be \emph{adapted} to the sub-Riemannian structure if $X_1,\dots,X_{k_i}$ is a local frame for $\mathcal D^i$, where $k_i=\dim(\mathcal D^i)$ is constant on $\mathcal O$. In particular $r(q)\equiv r$ is constant on $\mathcal O$. Notice that, the equiregularity assumption means that, on $\mathcal O$, $\distr^i$ are ``true'' distributions, and hence that there always exists a local adapted frame.
Define the smooth functions $b_{i_1\dots i_j}^\ell\in C^\infty(N)$ as
\begin{equation}
[X_{i_1},[X_{i_2},\dots,[X_{i_{j-1}},X_{i_j}]]]=\sum_{\ell=k_{j-1}+1}^{k_j} b_{i_1i_2\dots i_j}^\ell X_\ell \quad \mod \mathcal\ \mathcal D^{j-1},
\end{equation}
where $1\leq i_1,\dots,i_{j}\leq m=\dim(\mathcal D^1)$. Consider the $k_j-k_{j-1}$ dimensional square matrices
\begin{equation}
(B_j)^{h\ell}=\sum_{i_1,\dots,i_j=1}^r b_{i_1,\dots,i_j}^hb_{i_1,\dots,i_j}^\ell,\qquad \forall j=1,\dots,s,
\end{equation}
where $s$ is the step of the structure.
Then, denoting by $\nu^1,\dots,\nu^n$ the dual frame to $X_1,\dots,X_n$, the Popp's measure reads
\begin{equation}
\label{eq:popp}
\mathcal P=\frac{1}{\sqrt{\prod_{j=1}^s \det B_j}}|\nu^1\wedge\dots\wedge\nu^n|.
\end{equation}

One can check that the measure defined by \eqref{eq:popp} does not depend on the choice of the local adapted frame, and can be taken as the definition of Popp's measure. It is not hard to see, using the very definition, that if $q\in \bar{\mathcal{O}}$ is a non equiregular point, then $\lim\sqrt{\prod \det B_j}=0$, and hence 
the Radon-Nikodym derivative of Popp's measure computed with respect to any globally smooth measure on $N$ diverges to $+\infty$ on the singular region $\mathcal Z$.  Uniform estimates of this divergence can be found in \cite{GhezziJean17}.

\section{Sub-Riemannian distance from an hypersurface}
\label{sec:distance-singular-region}
We recall that $N$ is a smooth  (connected) manifold endowed with a sub-Riemannian structure, and that $\mathcal Z\subset N$ is a closed, embedded hypersurface with no characteristic points. We stress that $\mathcal{Z}$ is not necessarily the complement of the equiregular region of the sub-Rie\-mann\-ian structure. The distance from the singular region $\delta:N\to[0,\infty)$ is
\begin{equation}
\delta(p)=\inf\{d(q,p) \mid q\in \mathcal Z\},\qquad \forall p\in N.
\end{equation}
In the following we resume some fundamental facts about $\delta$. (See Figure~\ref{fig:1}.) 
\begin{figure}
\includegraphics[scale=.9]{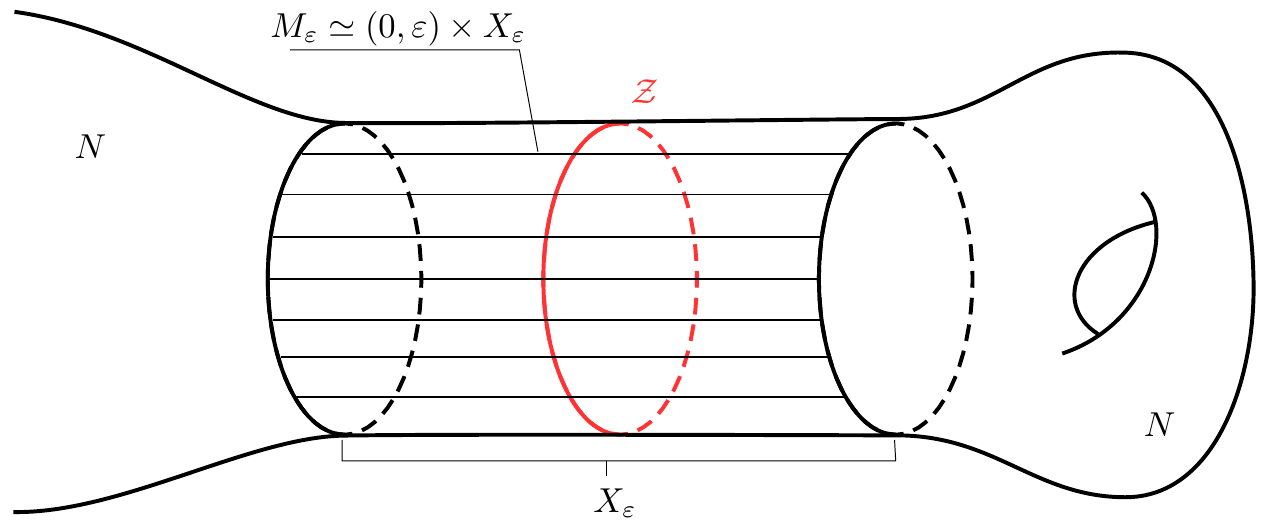}
\caption{Tubular neighborhood of the singular region.}\label{fig:1}
\end{figure}

\begin{prop}
\label{prop:smoothness}
Let $N$ be a smooth sub-Riemannian manifold and 
$\mathcal Z\subset N$ be a smooth, embedded, compact hypersurface with no characteristic points.
Then:
\begin{itemize}
\item[i)] $\delta:N\to[0,\infty)$ is Lipschitz w.r.t.\ the sub-Riemannian distance and $|\nabla\delta|\leq 1$ a.e.;
\item[ii)] there exists $\varepsilon>0$ such that $\delta:M_\varepsilon\to[0,\infty)$ is smooth, where $M_\varepsilon=\{ 0<\delta(p)<\varepsilon\}$;
\item[iii)] letting $X_\varepsilon=\{\delta(p)=\varepsilon\}$,
there exists a smooth diffeomorphism 
$F: (0,\varepsilon)\times X_\varepsilon\to M_{\varepsilon}$,
such that 
\begin{equation}
\delta(F(t,q))=t\quad\text{and}\quad F_*\partial_t=\nabla\delta,\quad \text{for }(t,q)\in (0,\varepsilon)\times X_\varepsilon.
\end{equation}
Moreover, $|\nabla \delta|\equiv 1$ on $M_\varepsilon$.

\end{itemize}
\end{prop}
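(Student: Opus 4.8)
The plan is to reduce everything to the geometry of the endpoint map for geodesics leaving $\mathcal Z$ orthogonally, exploiting the absence of characteristic points via Proposition~\ref{prop:no_abn}. First I would establish (i): the triangle inequality for $d$ immediately gives $|\delta(p)-\delta(p')|\le d(p,p')$, so $\delta$ is $1$-Lipschitz w.r.t.\ $d$; then $|\nabla\delta|\le 1$ a.e.\ follows from the standard fact that for Lipschitz functions on sub-Riemannian manifolds the horizontal gradient (which exists a.e.) is bounded by the metric Lipschitz constant — this can be seen by testing against horizontal curves and using \eqref{eq:speed}, or quoting the sub-Riemannian Rademacher-type statement. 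This part is routine and I would dispatch it quickly.

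The heart of the matter is (ii) and (iii), and here the key object is the \emph{normal exponential map from $\mathcal Z$}. Since $\mathcal Z$ has no characteristic points, at each $q\in\mathcal Z$ the space $\distr_q$ is transversal to $T_q\mathcal Z$, so $\distr_q\cap(T_q\mathcal Z)^\perp$ — more precisely, the annihilator construction — singles out, up to sign, a unit covector $\lambda_q\in T_q^*N$ with $H(\lambda_q)=1/2$ vanishing on $T_q\mathcal Z$; this depends smoothly on $q$ by the implicit function theorem applied to the bracket-generating transversality condition. Define $\mathcal{E}\colon \R\times\mathcal Z\to N$ by $\mathcal{E}(t,q)=\pi\circ e^{t\vec H}(\lambda_q)=\exp_q(t\lambda_q)$; by completeness and the sub-Riemannian Hopf-Rinow theorem this is defined for all $t$, and it is smooth since $\vec H$ is smooth and $\mathcal Z$ is an embedded smooth hypersurface. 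I would then compute the differential of $\mathcal{E}$ at $t=0$: the $\partial_t$-direction maps to the horizontal vector dual to $\lambda_q$, which by the transversality (no characteristic points) is \emph{not} tangent to $\mathcal Z$, while the $T_q\mathcal Z$-directions map isomorphically onto $T_q\mathcal Z$. Hence $d\mathcal{E}_{(0,q)}$ is an isomorphism, and since $\mathcal Z$ is compact, the inverse function theorem gives a uniform $\varepsilon>0$ such that $\mathcal{E}\colon (0,\varepsilon)\times\mathcal Z\to\mathcal{E}((0,\varepsilon)\times\mathcal Z)$ is a diffeomorphism onto a neighborhood of $\mathcal Z$ (minus $\mathcal Z$ itself), with the further property — the normal injectivity radius being positive — that this image is exactly $M_\varepsilon$ for $\varepsilon$ small. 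The transversality condition \eqref{eq:trans} from optimal control theory, combined with Proposition~\ref{prop:no_abn} (which tells us that length-minimizers from $\mathcal Z$ to $M\setminus\mathcal Z$ are \emph{normal}, not abnormal, precisely because there are no characteristic points), guarantees that for $p\in M_\varepsilon$ the minimizing curve realizing $\delta(p)$ is exactly one of these normal geodesics $t\mapsto\mathcal{E}(t,q)$, and by \eqref{eq:speed} its length equals the parameter $t$. Therefore $\delta(\mathcal{E}(t,q))=t$ on $(0,\varepsilon)\times\mathcal Z$, which proves $\delta$ is smooth on $M_\varepsilon$ (being $t\circ\mathcal{E}^{-1}$), giving (ii).

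For (iii), I would take $F\colon (0,\varepsilon)\times X_\varepsilon\to M_\varepsilon$ to be the reparametrized map: identify $X_\varepsilon=\{\delta=\varepsilon\}$ with $\mathcal Z$ via $q\mapsto\mathcal{E}(\varepsilon,q)$ (a diffeomorphism by the above), and set $F(t,q)=\mathcal{E}(t,\mathcal{E}(\varepsilon,\cdot)^{-1}(q))$; then $\delta(F(t,q))=t$ is immediate, and $F$ is a diffeomorphism as a composition of diffeomorphisms. The identity $F_*\partial_t=\nabla\delta$ follows because $\nabla\delta$ is the horizontal gradient of a function which equals $t$ along the geodesic flow lines, so $d\delta(\dot\gamma)=1=|\dot\gamma|^2$ along these unit-speed geodesics, forcing $\dot\gamma=\nabla\delta$ (the horizontal vector achieving equality in $d\delta(W)=g(\nabla\delta,W)\le|\nabla\delta|\,|W|$ with $|\nabla\delta|=1$). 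In particular $|\nabla\delta|\equiv 1$ on $M_\varepsilon$. The main obstacle I anticipate is \emph{not} the local inverse function theorem argument but rather the global step: showing that for small $\varepsilon$ the minimizer from $\mathcal Z$ to any $p$ with $\delta(p)<\varepsilon$ is unique and lands in the chart — i.e.\ bounding the normal injectivity radius of $\mathcal Z$ away from zero. Here compactness of $\mathcal Z$ is essential (absent it, one needs the positivity of the injectivity radius as a hypothesis, as the Remark after Theorem~\ref{t:a} indicates), and the argument is the sub-Riemannian analogue of the classical Riemannian tubular neighborhood theorem: a contradiction/compactness argument ruling out a sequence of minimizers of length $\to 0$ with non-uniqueness or self-intersection, using continuity of the distance and the local diffeomorphism property just established. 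Care is needed because, unlike the Riemannian case, there may be other abnormal minimizers in $N$ — but Proposition~\ref{prop:no_abn} confines the pathology away from the minimizers \emph{from $\mathcal Z$}, which is all we use.
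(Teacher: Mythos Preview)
Your strategy matches the paper's: (i) via the triangle inequality and the sub-Riemannian Rademacher-type bound; (ii)--(iii) via the normal exponential map from $\mathcal Z$, local invertibility at the zero section, compactness of $\mathcal Z$ to pass to a uniform $\varepsilon$, and Proposition~\ref{prop:no_abn} together with the transversality condition \eqref{eq:trans} to identify the image with $\{\delta<\varepsilon\}$ and read off $\delta$ as the time parameter. The paper makes your ``contradiction/compactness'' step for global injectivity explicit by proving that the local injectivity radius $q\mapsto\varepsilon(q)$ is $1$-Lipschitz along $\mathcal Z$ and then taking $\varepsilon_0=\tfrac12\min_{\mathcal Z}\varepsilon(q)$, which is the concrete version of what you sketch.

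There is one genuine gap. You select ``up to sign, a unit covector $\lambda_q$\dots\ this depends smoothly on $q$'' and then define $\mathcal E:\R\times\mathcal Z\to N$ using this global section. But the annihilator bundle $A\mathcal Z=\{\lambda\in T^*N|_{\mathcal Z}:\lambda(T_q\mathcal Z)=0\}$ is a rank-one real line bundle over $\mathcal Z$ and need not be trivial: a global nowhere-vanishing section exists precisely when $\mathcal Z$ is two-sided in $N$, and fails for instance when $\mathcal Z$ is the central circle of a M\"obius strip (cf.\ the Remark immediately following the Proposition). The paper sidesteps this by working with the map $E:A\mathcal Z\to N$, $E(q,\lambda)=\exp_q(\lambda)$, on the total space of the bundle rather than on $\R\times\mathcal Z$; one then has $\delta(E(q,\lambda))=\sqrt{2H(\lambda)}$, and the diffeomorphism in (iii) is built from $X_\varepsilon$---which in the one-sided case is a connected double cover of $\mathcal Z$---rather than from $\mathcal Z$ itself. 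Your argument is correct as written in the two-sided case and is repaired in general by exactly this bundle reformulation; indeed this is why the statement of (iii) is phrased with $X_\varepsilon$ and not with $\mathcal Z$.
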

\begin{rmk}
The statement and the proof can be simplified if $\mathcal Z$ is two-sided (e.g. when $N$ and $\mathcal Z$ are orientable). In this case, $M_\varepsilon=(-\varepsilon,0)\times \mathcal Z \sqcup (0,\varepsilon)\times \mathcal Z$ and there is no need to introduce $X_\varepsilon$. However, this is not true if $\mathcal Z$ is one-sided. (For example, think at a Grushin-like structure on the M\"obius strip, where $\mathcal Z$ is the central line.)
\end{rmk}

\begin{proof} 
We prove i). Let $p,q\in N$. By the triangle inequality we have $\delta(p)\leq d(p,q)+\delta(q)$, thus proving that $\delta$ is $1$-Lipschitz with respect to the sub-Riemannian distance. By \cite[Thm.\ 8]{FHK99} (see also \cite[Prop.\ 2.9]{FSSC97}, \cite[Thm.\ 1.3]{GN98}) this implies that the sub-Riemannian gradient satisfies $|\nabla\delta|\leq 1$ almost everywhere.

To prove ii), we follow the same strategy presented in \cite[Lemma 7.7%, Lemma 7.8
]{quantum-confinement}.
We first define the {\em annihilator bundle of the singular set} 
\begin{equation}
\label{eq:annihil}
A\mathcal{Z} := \{(q,\lambda) \in T^*N \mid \lambda(T_q \mathcal{Z}) = 0 \},
\end{equation}
which is a rank 1 vector bundle with base $\mathcal{Z}$.  The map $i_0  : \mathcal{Z} \to A\mathcal{Z}$, $i_0(q) = (q,0)$ is an embedding of $\mathcal{Z}$ onto the zero section of $A\mathcal{Z}$. %(see Figure~\ref{f:fig3})
The bundle $A\mathcal Z$ plays the role of the Riemannian normal bundle usually employed in the construction of  a tubular neighborhood.  
Let $0 \neq \lambda \in A_q\mathcal{Z}$. Since $q$ is not a characteristic point, we have $\lambda(\distr_q) \neq 0$. Hence $H(\lambda) >0$,
and the vector 
\begin{equation}
v_\lambda=\pi_*\vec{H}(\lambda)
=\sum_{i=1}^r\langle \lambda, X_i\rangle X_i(q),
\end{equation}
where $X_1,\dots, X_r$ is a local generating frame of $\mathcal D$, is a non-zero horizontal vector transversal to $T_q\mathcal Z$. Observe that $|v_\lambda|^2=\langle\lambda,v_\lambda\rangle=2H(\lambda)>0$, even if $X_1,\dots,X_r$ are not independent at $q$.
%$\tilde H|_q : T_q^*N \to D_q $, introduced through the relation
%\begin{equation}
%\langle \nu, \tilde H(\lambda)\rangle=2H(\lambda,\nu)\ \forall \nu\in T^*_qN
%\end{equation} 
%is well defined and $\tilde H(\lambda)=.
%Since $H(\lambda)>0$, the non-zero vector $v_q=\tilde H|_q(\lambda) \in \distr_q$ is transverse to $T_q \mathcal{Z}$.

Let $D \subseteq T^*N$ be the set of $(q,\lambda)$ such that $\exp_q(\lambda)$ is well defined. Indeed, $D$ is open and so is $D \cap A\mathcal{Z}$ as a subset of $A\mathcal Z$. Consider the map $E : A \mathcal{Z} \cap D \to N$, given by
\begin{equation}
\label{eq:E}
E(q,\lambda) := \exp_q(\lambda) = \pi\circ e^{\vec{H}}(\lambda).
\end{equation}

{\em Claim 1.}
Given $q\in\mathcal Z$, $E$ is a diffeomorphism on a neighborhood  $U(q)\subseteq D\cap A\mathcal Z$ of $i_0(q)=(q,0) \in A \mathcal{Z}$. \\
To prove Claim 1, we first notice that $i_0(\mathcal{Z}) \subseteq D$, and $E\circ i_0 = \mathrm{id}_{\mathcal{Z}}$.
Moreover, $dE$ has full rank on $i_0(\mathcal{Z})$. In fact, identifying $T_{(q,0)}A\mathcal Z\simeq T_q\mathcal Z\oplus A_q\mathcal Z$, we have $d_{(q,0)}E|_{T_q\mathcal Z}=\mathrm{id}_{T_q\mathcal Z}$ and
for $\delta\lambda\in A_q\mathcal Z$
\begin{equation}
\begin{split}
%E(q,\delta\lambda) = 
d_{(q,0)}E(\delta\lambda)&
=\sum_{i=1}^r\langle\delta \lambda,X_i\rangle X_i= v_{\delta\lambda}\neq 0.
\end{split}
\end{equation}
Claim $1$ now follows from the inverse function theorem and from the fact that $\dim(A\mathcal{Z}) = \dim(N)$. Moreover, since $\mathcal{Z}$ is embedded, and $2H$, restricted to the fibers of $A\mathcal{Z}$, is a well defined norm, the neighborhood $U(q)$ can be taken of the form
\begin{equation}
U(q)=U_{\varrho}(q)= \{(q',\lambda') \mid d  (q,q') < \varrho,\; \sqrt{2H(\lambda')} < \varrho\}, \qquad \varrho>0.
\end{equation}
For any $q \in \mathcal{Z}$, let 
\begin{equation}
\varepsilon(q) := \sup \{ \varrho>0 \mid E : U_{\varrho}(q) \to E(U_{\varrho}(q)) \text{ is a diffeomorphism} \} > 0.
\end{equation}

{\em Claim 2.} The function $\varepsilon : \mathcal{Z} \to \R_+$ is continuous, since
\begin{equation}\label{eq:ineqs}
|\varepsilon(q)-\varepsilon(q')| \leq d (q,q'), \qquad \forall q,q' \in \mathcal{Z}.
\end{equation}
To prove it, assume without loss of generality that $\varepsilon(q) \geq \varepsilon(q')$. If $d (q,q') \geq \varepsilon(q)$, then \eqref{eq:ineqs} holds. On the other hand, if $d (q,q') < \varepsilon(q)$, the triangle inequality for $d $ implies that that $U_{\varrho}(q') \subseteq U_{\varepsilon(q)}(q)$ for $\varrho =\varepsilon(q) - d (q,q')$, implying Claim 2.

Thanks to the compactness\footnote{In view of Remark~\ref{rmk:cpt}, we notice that the function $\varepsilon(q)$ is the sub-Riemannian version of the normal injectivity radius from $\mathcal{Z}$ at $q$, and thus $\inf_q\varepsilon(q)$ is the normal injectivity radius from $\mathcal{Z}$. Hence, if $\mathcal{Z}$ is not compact, we can still proceed by assuming that the normal injectivity radius from $\mathcal{Z}$ is strictly positive.} of $\mathcal{Z}$, we define the open neighborhood of $i_0(\mathcal{Z})$:
\begin{equation}
U:=\{(q,\lambda) \in A\mathcal{Z} \mid \sqrt{2H(\lambda)} < \varepsilon_0 \}, \qquad \varepsilon_0 := \min \{\varepsilon(q)/2 \mid q \in \mathcal{Z}\}>0.
\end{equation}

{\em Claim 3}. The restriction of $E$ to $U$ is injective. \\
This follows from the fact that for $(q_1,\lambda_1), (q_2,\lambda_2)\in U$, if $\varepsilon(q_1) \leq \varepsilon(q_2)$, then $(q_1,\lambda_1)\in U_{\varepsilon (q_2)}(q_2)$ (on which $E$ is a diffeomorphism by Claim 1).
%To prove it, let $(q_i,\lambda_i) \in U$, for $i=1,2$, with $p = E(q_i,\lambda_i)$. The normal geodesics $\gamma_i :[0,1] \to N$ defined by $\gamma_i(t) = E(q_i,t \lambda_i)$ have length $\ell(\gamma_i) = \sqrt{2H(\lambda_i)}$ by \eqref{eq:speed}. Without loss of generality, we assume that $\varepsilon(q_1) \leq \varepsilon(q_2)$. By the triangle inequality,
%\begin{equation}
%d (q_1,q_2) \leq d (q_1, p) + d (q_2,p) \leq \ell(\gamma_1) + \ell(\gamma_2) < 2\varepsilon_0 \leq \varepsilon(q_2).
%\end{equation}
%Hence, both $(q_i,\lambda_i) \in U_{\varepsilon(q_2)}(q_2)$. Since $E$ is injective on $U_{\varepsilon(q_2)}(q_2)$, then $(q_1,\lambda_1) = (q_2,\lambda_2)$, proving the claim. 

By Claim 3, $E: U \to E(U)$ is a smooth diffeomorphism and  $E(U) \subseteq \{ \delta  < \varepsilon_0\}$. Up to taking a smaller $\varepsilon_0$, we can assume that $E(U) \subseteq \{ \delta  < \varepsilon_0\} \subseteq K$, where $K$ is compact.
\vspace{0.1cm}

{\em Claim 4.}
 $E(U) = \{ \delta  < \varepsilon_0\}$ and, on $E(U)$, the sub-Riemannian distance from $\mathcal{Z}$ satisfies
\begin{equation}\label{eq:formuladistance}
\delta (E(q,\lambda)) = \sqrt{2H(\lambda)}.
\end{equation}

To prove Claim 4, let $p \in \{ \delta  < \varepsilon_0\} \subseteq K$. Since $K$ is compact, there exists at least one horizontal curve $\gamma:[0,1] \to N$ minimizing the sub-Riemannian distance between $\mathcal{Z}$ and $p$. By Proposition \ref{prop:no_abn}, this must be a normal geodesic, that is $p=E(q,\lambda)$, with $q \in \mathcal{Z}$ and $\lambda \in T_{q}^*N$. 
%Standard variation formulas show that, if there exists a direction $w \in T_{q} \mathcal{Z}$ with $\lambda_{q}(w) \neq 0$, then one can deform $\gamma$ in the direction of $w$, keeping its initial point in $\mathcal{Z}$, and decreasing its length. 
Since $\gamma$ is minimizing, transversality conditions \eqref{eq:trans} imply that $\lambda(T_{q} \mathcal{Z}) = 0$, that is $(q,\lambda) \in A\mathcal{Z}$. Moreover, $\sqrt{2H(\lambda)} = \ell(\gamma) =  \delta (p) < \varepsilon_0$. This implies that $(q,\lambda) \in U$, that is $p = E(q,\lambda) \in E(U)$, and $\delta (E(q,\lambda)) = \sqrt{2H(\lambda)}$, as claimed.
Since $\sqrt{2H(\lambda)}$ is a smooth function for $H(\lambda)\neq 0$, $\delta$ is smooth on $\{0< \delta  < \varepsilon\}$, for all $\varepsilon \leq \varepsilon_0$.

We prove statement iii). 
Let $0<\varepsilon<\varepsilon_0$
%We construct a map $X_\varepsilon\times(0,\varepsilon)$ and $M_\varepsilon$ by using the smooth diffeomorphism $E|_{M_\varepsilon}:U\setminus i_0(\mathcal Z)\to M_\varepsilon$. 
and let $F:(0,\varepsilon)\times X_\varepsilon\to M_\varepsilon$
be defined by
\begin{equation}
F(t,q)=E\left(q_0,\frac{t}{\sqrt{2H(\lambda)}}\lambda\right)
\end{equation}
where, for $q\in X_\varepsilon$, we are using Claim 4 to write $q=E(q_0,\lambda)$ for a unique $(q_0,\lambda)\in U$ such that $\sqrt{2H(\lambda)}=\varepsilon$.
The function $F$ is a smooth diffeomorphism, with inverse
\begin{equation}
F^{-1}(p)=\left(\sqrt{2H(\nu)},E\Big(p_0,\frac{\varepsilon}{\sqrt{2H(\nu)}}\nu\Big)\right),\text{ for } p=E(p_0,\nu)\in M_\varepsilon,\ (p_0,\nu)\in U.
\end{equation}
Moreover, by \eqref{eq:formuladistance} and the definition of $F$
\begin{equation}
\delta(F(t,q))=\sqrt{2H\left(\frac{t}{\sqrt{2H(\lambda)}}\lambda\right)}=t, \qquad \forall (t,q)\in (0,\varepsilon)\times X_\varepsilon.
\end{equation}
Notice that $F$ is the gradient flow of $\delta$ on $M_\varepsilon$.
Now, for $q\in X_\varepsilon$, the curves $t\mapsto F(t ,q)$ are the unique normal geodesics with speed $1$ that minimize the sub-Riemannian distance from $\mathcal Z$.
Hence, $F_*\partial_t$ is a horizontal vector field and $|F_*\partial_t|_{}=1$.
We conclude the proof by showing that that $\nabla \delta = F_*\partial_t$. In fact, by Cauchy-Schwarz inequality, if $\nabla \delta$ is not parallel to $F_*\partial_t$, then $1=|g(F_*\partial_t,\nabla \delta)| < |\nabla \delta|_{}$ at some point $F(\bar t,\bar q)$. On the other hand, the unit-speed curve $\gamma(s) = e^{s \nabla \delta/|\nabla \delta|_{}}F(\bar t,\bar q)$ satisfies, for $T$ small enough,
\begin{equation}
\begin{split}
\delta (\gamma(T)) - \delta (\gamma(0)) &
= \int_0^T \dfrac{d}{ds}\delta(\gamma(s))\Big|_{s=t}dt
= \int_0^T g(\nabla\delta(\gamma(t)),\dot \gamma(t))dt\\
&= \int_0^T g\left(\nabla \delta,\frac{\nabla \delta}{|\nabla \delta|_{}}\right)dt 
= \int_0^T |\nabla \delta |_{} >  T = \ell(\gamma|_{[0,T]}),
\end{split}
\end{equation}
leading to a contradiction, and implying the statement.
\end{proof}

\begin{rmk}
In Proposition \ref{prop:smoothness}, one can replace the smoothness of $\mathcal Z$ by its $C^k$-regularity, $k\geq 2$, obtaining in (ii) the $C^k$-regularity of $\delta$, and in (iii) the $C^{k-1}$-regularity of $F$. Moreover, taking into account the observation of the footnote above, the argument of the proof can be adapted to yield a generalization of the co-dimension $1$ case in \cite[Thm.~4.2]{R17}. This can be done by exploiting the Ball-Box Theorem \cite[Thm.~10.62]{nostrolibro} to estimate $\operatorname{Reach}(S,K)$ of \cite[Thm.~4.2]{R17} in terms of the $\varepsilon$ given by Proposition \ref{prop:smoothness} applied to $\mathcal Z=\{p\in S : d(p,K)<r\}$, for sufficiently small $r>0$.
\end{rmk}

\section{Main quantum completeness criterion}
\label{sec:main-criterion}
Let $N$ be a complete sub-Riemannian manifold and $\mathcal{Z} \subset N$ be a smooth embedded hypersurface with no characteristic points. Let $\omega$ be a measure on $N$, smooth on $M=N\setminus \mathcal Z$ or one of its connected components.
We are interested in the essential self-adjointness of the operator
\begin{equation}
H=-\Delta_\omega=-\dive_\omega \circ \nabla, \qquad \dom(H) = C_c^\infty(M).
\end{equation}

In the following, we denote with $L^2(M)$ the complex Hilbert space of (equivalence classes of) functions $u : M \to \mathbb{C}$, with scalar product
\begin{equation}
\label{eq:L2}
\langle u, v \rangle  = \int_M u \bar{v}\, d \omega, \qquad u,v \in L^2(M),
\end{equation}
where the bar denotes complex conjugation. The corresponding norm is $\|u\|^2 = \langle u,u \rangle$.
Similarly, given a coordinate neighborhood $U\subseteq M$ and denoting by $dx$ the Lebesgue measure on it, we denote by $L^2(U,dx)$ the complex Hilbert space of square-integrable functions $u : U\! \to\! \mathbb{C}$ satisfying \eqref{eq:L2} with $d\omega$ replaced by $dx$ and $M$ by $U$.

Our main result is the following.

\begin{theorem}[Main quantum completeness criterion]\label{thm:main}
Let $N$ be a complete sub\--\-Rie\-man\-nian manifold endowed with a measure $\omega$. Assume $\omega$ to be smooth on $N\setminus \mathcal{Z}$, where the singular set $\mathcal Z$ is a smooth, embedded, compact hypersurface with no characteristic points.
Assume also that, for some $\varepsilon>0$, there exists a constant $\kappa\geq 0$ such that, letting $\delta = d(\mathcal{Z}, \cdot\,)$, we have
\begin{equation}\label{eq:cond}
\Veff = \left(\frac{\Delta_\omega\delta}{2}\right)^2 + \left(\frac{\Delta_\omega\delta}{2}\right)^\prime \geq \frac{3}{4\delta^2} -\frac{\kappa}{\delta}, \qquad \text{for } 0<\delta \leq \varepsilon,
\end{equation}
where the prime denotes the derivative in the direction of $\nabla\delta$. 
Then, $\Delta_\omega$ with domain $C^\infty_c(M)$ is essentially self-adjoint in $L^2(M)$, where $M = N \setminus \mathcal{Z}$, or any of its connected components.

Moreover, if $M$ is relatively compact, the unique self-adjoint extension of $\Delta_\omega$ has compact resolvent. Therefore, its spectrum is discrete and consists of eigenvalues with finite multiplicity.
\end{theorem}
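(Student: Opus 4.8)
The plan is to reduce the essential self-adjointness of $\Delta_\omega$ to a one-dimensional problem near $\mathcal{Z}$ via the tubular neighborhood coordinates provided by Proposition~\ref{prop:smoothness}. Using the diffeomorphism $F:(0,\varepsilon)\times X_\varepsilon\to M_\varepsilon$ with $F_*\partial_t=\nabla\delta$, one sees that $\delta$ plays the role of a radial coordinate, and $|\nabla\delta|\equiv 1$ on $M_\varepsilon$ means the sub-Laplacian splits as $\Delta_\omega = \partial_\delta^2 + (\Delta_\omega\delta)\,\partial_\delta + (\text{terms tangential to level sets})$. The classical strategy (following \cite{quantum-confinement}) is to perform a unitary transformation $u\mapsto \phi^{1/2} u$ (or rather work with a ground-state substitution that absorbs the drift $\Delta_\omega\delta$), turning $\Delta_\omega$ into a Schr\"odinger-type operator whose radial part is $-\partial_\delta^2 + \Veff$ plus a nonnegative tangential piece. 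Condition \eqref{eq:cond}, $\Veff \geq \tfrac{3}{4\delta^2}-\tfrac{\kappa}{\delta}$, is exactly the threshold for which the operator $-\partial_\delta^2 + \Veff$ on $(0,\varepsilon)$ is in the limit point case at $0$ (the classical Weyl/Kalf--Walter--Schmincke criterion: $c/\delta^2$ with $c\geq 3/4$ forces limit point). First I would establish the needed functional-analytic input: the regularity lemma for weak solutions coming from subellipticity (Lemma~\ref{l:shubin}), which guarantees that any $u$ in the domain of the adjoint $H^*$ is smooth on $M$, and the sub-Riemannian Rellich--Kondrachov theorem (Lemma~\ref{l:RK}) to control compactness of embeddings.

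The core argument: let $u\in\dom(H^*)$ with $H^*u=\pm i u$ (it suffices to show $u=0$). By elliptic regularity $u\in C^\infty(M)$. Away from $\mathcal{Z}$ there is no issue since completeness of $N$ handles the behavior at infinity (as in \cite{Strichartz}); the only obstruction to essential self-adjointness is the boundary $\mathcal{Z}$. Using a cutoff supported in $M_\varepsilon$ and the tubular coordinates, I would integrate by parts in the $\delta$ variable, decomposing $\|\nabla u\|^2$ and $\langle H^*u, u\rangle$-type quantities. The crucial step is an Agmon-type / Hardy-type inequality: after the ground-state substitution $u = \psi\, v$ where $\psi$ solves (approximately) $\Delta_\omega\psi = 0$ radially — concretely $\psi$ behaves like $\exp(\tfrac12\int \Delta_\omega\delta)$ — the energy form becomes
\begin{equation}
\mathcal{E}(u,u) = \int |\partial_\delta v|^2 \psi^2\,d\omega + \int \Veff |v|^2 \psi^2\, d\omega + (\text{nonneg.\ tangential term}),
\end{equation}
and \eqref{eq:cond} together with the one-dimensional Hardy inequality $\int_0^\varepsilon |f'|^2 \geq \tfrac14\int_0^\varepsilon |f|^2/\delta^2$ (for $f$ vanishing appropriately) shows that no nontrivial $L^2$ solution of $H^*u=\pm i u$ can exist. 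Equivalently, one shows that $C_c^\infty(M)$ is a form core, or that $H$ has no $L^2$ solutions to the deficiency equation, by a density/approximation argument using cutoffs $\chi_n(\delta)$ with controlled $\|\nabla\chi_n\|$, exploiting precisely the constant $3/4$ to close the estimate.

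For the second statement — compact resolvent when $M$ is relatively compact — the plan is: once $H$ is known to be (essentially) self-adjoint and bounded below, its form domain $Q(\bar H)$ embeds into $L^2(M)$, and I want this embedding to be compact. The point is that although $\vol_\omega(M)=\infty$ (the measure blows up near $\mathcal{Z}$), the confining potential $\Veff\sim \tfrac{3}{4\delta^2}$ generated by the singular drift forces functions of bounded energy to be small near $\mathcal{Z}$: one gets an Agmon-type weighted estimate $\int \tfrac{|u|^2}{\delta^2}\,d\omega \leq C(\mathcal{E}(u,u)+\|u\|^2)$. Then, splitting $M = \{\delta\geq\eta\} \cup \{\delta<\eta\}$: on $\{\delta\geq\eta\}$ the measure $\omega$ is equivalent to a smooth measure on a relatively compact set, and the sub-Riemannian Rellich--Kondrachov theorem (Lemma~\ref{l:RK}) gives compactness of the embedding of the form domain; on $\{\delta<\eta\}$ the weighted estimate makes the tail contribution $O(\eta^2)$ uniformly on bounded-energy sets. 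A standard diagonal/$\varepsilon$-net argument then upgrades this to compactness of the full embedding, hence $\bar H$ has compact resolvent and purely discrete spectrum. The main obstacle I anticipate is the first display/analysis: correctly handling the ground-state substitution and the tangential (level-set) part of $\Delta_\omega$ in the genuinely sub-Riemannian (rank-varying, possibly with abnormals away from $\mathcal{Z}$) setting, i.e.\ verifying that the decomposition of $\Delta_\omega$ in tubular coordinates really does produce a nonnegative transverse operator and that the subelliptic regularity is strong enough to justify all the integrations by parts up to $\mathcal{Z}$ — this is where Lemmas~\ref{l:shubin} and~\ref{l:RK} and Proposition~\ref{prop:smoothness} must be combined carefully.
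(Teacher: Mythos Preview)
Your proposal is correct and follows essentially the same route as the paper: tubular coordinates from Proposition~\ref{prop:smoothness}, a unitary/ground-state transformation producing $V_{\mathrm{eff}}$, the 1D Hardy inequality to upgrade $\tfrac{3}{4\delta^2}$ to $\tfrac{1}{\delta^2}$ (this is the paper's \emph{weak Hardy inequality}, Proposition~\ref{prop:weak-hardy}), and then an Agmon-type argument to kill solutions of the deficiency equation; the compact-resolvent argument via the near/far splitting and Lemma~\ref{l:RK} is exactly what the paper does.

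Two refinements you should make explicit. First, since the weak Hardy inequality already shows $H$ is semibounded from below by some $c\in\R$, it is cleaner to use the criterion ``$\ker(H^*-E)=0$ for one real $E<c$'' rather than $H^*u=\pm i u$; with the latter the Agmon inequality only yields $c\|f\psi\|^2\le (\text{r.h.s.})\to 0$, which is vacuous if $c\le 0$. Second, your phrase ``cutoffs $\chi_n(\delta)$ with controlled $\|\nabla\chi_n\|$'' hides the actual mechanism: the paper's Agmon estimate (Proposition~\ref{p:Agmon}) chooses $f=F(\delta)$ where $F$ solves $F'=\sqrt{\tfrac{1}{\tau^2}-\tfrac{\kappa}{\tau}}\,F$ on $[2\zeta,\eta]$, interpolated linearly to $0$ on $[\zeta,2\zeta]$. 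The point is that $F'(\delta)^2-\big(\tfrac{1}{\delta^2}-\tfrac{\kappa}{\delta}\big)F(\delta)^2$ vanishes on $[2\zeta,\eta]$, while on $[\zeta,2\zeta]$ one needs $F(2\zeta)=O(\zeta)$ so that the slope $F(2\zeta)/\zeta$ stays bounded as $\zeta\to 0$; this holds precisely because the coefficient in the weak Hardy is $1$ (i.e.\ $\tfrac34+\tfrac14$), and fails for any coefficient $<1$. That is where ``exploiting precisely the constant $3/4$'' becomes concrete.
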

\begin{rmk}\label{rmk:cpt}
The compactness of $\mathcal{Z}$ in Theorem~\ref{thm:main} can be replaced by the weaker assumption that the (normal) injectivity radius from $\mathcal{Z}$ is strictly positive. Indeed, in this case, Proposition~\ref{prop:smoothness} and the forthcoming Proposition~\ref{prop:weak-hardy} still hold true. (See footnote in the proof of Proposition~\ref{prop:smoothness}.)
\end{rmk}

We start by showing two functional theoretic results holding on any sub-Riemannian manifold $M$ equipped with a smooth measure $\omega$.

We denote by $W^1(M)$ the Sobolev space of functions in $L^2(M)$ with distributional (sub-Riemannian) gradient $\nabla u \in L^2(\mathcal D)$, where the latter is the complex Hilbert space of sections of the complexified distribution $X : M \to \mathcal D^\mathbb{C}\subseteq TM^\mathbb{C}$, with scalar product
\begin{equation}
\label{eq:L2tang}
\langle X, Y \rangle  = \int_M g(X,Y)\, d \omega, \qquad  X,Y \in L^2(\mathcal D).
\end{equation}
The Sobolev space $W^1(M)$ is a Hilbert space when endowed with the scalar product
\begin{equation}\label{eq:W1}
\langle u, v \rangle_{W^1}= \langle \nabla u, \nabla v \rangle + \langle u, v \rangle.
\end{equation}
Similarly, given a coordinate neighborhood $U\subseteq M$ and denoting by $dx$ the Lebesgue measure on it, we denote by $W^1(U,dx)$ the Sobolev space of functions in $L^2(U,dx)$, with distributional (sub-Riemannian) gradient in $L^2(\mathcal D|_U,dx)$, that is the complex Hilbert space of sections of the complexified distribution $X : U \to \mathcal{D}^{\mathbb C}\subseteq TM^\mathbb{C}$, with the scalar product defined in \eqref{eq:L2tang} where $d\omega$ is replaced by $dx$.
Moreover,
we denote by $L^2_\loc(M)$ and $W^1_\loc(M)$ the space of functions $u: M \to \mathbb{C}$ such that, for any relatively compact domain $\Omega \subseteq M$, their restriction to $\Omega$ belongs to $L^2(\Omega)$ and $W^1(\Omega)$, respectively.  Finally, we let $W^1_0(M)$ be the closure of $C^\infty_c(M)$ w.r.t.\ the norm given in \eqref{eq:W1}.
%Similarly, $L^2_\comp(M)$ and $W^1_\comp(M)$ denote the spaces of functions in $L^2(M)$ and $W^1(M)$, respectively, with compact support. 

\begin{lemma}\label{l:shubin}
Let $M$ be a sub-Riemannian manifold equipped with a smooth measure $\omega$. Then $\dom(H^*)\subseteq W^1_\loc(M)$.
\end{lemma}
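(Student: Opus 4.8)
The plan is to show that any $u \in \dom(H^*)$ is locally in $W^1$, exploiting the subellipticity of $\Delta_\omega$. By definition of the adjoint, $u \in \dom(H^*)$ means $u \in L^2(M)$ and there exists $f \in L^2(M)$ such that $\langle u, H\varphi\rangle = \langle f, \varphi\rangle$ for all $\varphi \in C^\infty_c(M)$; that is, $\Delta_\omega u = f$ holds in the sense of distributions on $M$ (here I absorb the sign, writing $H = -\Delta_\omega$). First I would reduce to a local statement: fix a relatively compact coordinate neighborhood $U \subseteq M$ with a local generating family $X_1,\dots,X_r$, and recall from \eqref{eq:sublaplc} that on $U$ the operator has the form $\Delta_\omega = \sum_i X_i^2 + c_i X_i$ with smooth coefficients $c_i = \dive_\omega(X_i)$. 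Since $U$ is relatively compact in $M$ where $\omega$ is smooth and positive, $L^2(U)$ and $L^2(U,dx)$ have equivalent norms, so it suffices to prove $u|_U \in W^1(U, dx)$.

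The key tool is a subelliptic a priori estimate: for the H\"ormander-type operator $L = \sum_i X_i^2 + c_i X_i$ on $U$, there is an $s > 0$ (one may take $s = 1/r$ where $r$ is the maximal step, but any positive exponent suffices here) and, for every pair of nested relatively compact open sets $V \Subset V' \Subset U$, a constant $C$ such that
\begin{equation}\label{eq:subell-est}
\|v\|_{H^s(V)} \leq C\left(\|Lv\|_{L^2(V')} + \|v\|_{L^2(V')}\right), \qquad \forall v \in C^\infty(V').
\end{equation}
This is the classical subelliptic estimate of H\"ormander, Folland--Stein, Rothschild--Stein (see \cite{Hormander}); it extends to distributions $v$ with $Lv \in L^2$ by the standard Friedrichs mollification argument. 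Applying \eqref{eq:subell-est} to $u$, which satisfies $Lu = \pm f \in L^2(V')$, gives $u \in H^s_{\loc}(U)$. In particular, if $s \geq 1$ we would be done immediately; in general one bootstraps: knowing $u \in H^{s}_{\loc}$, one commutes $L$ with a difference quotient or a tangential pseudodifferential operator of order $s$ to conclude $u \in H^{2s}_{\loc}$, and iterating finitely many times reaches $u \in H^1_{\loc}(U) \subseteq W^1(U, dx)$, since the Euclidean $H^1$ norm controls the horizontal gradient norm $\sum_i \|X_i v\|_{L^2}$ (the $X_i$ being smooth vector fields on the relatively compact $U$). Covering $M$ by countably many such $U$ yields $u \in W^1_{\loc}(M)$.

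The main obstacle is making the bootstrap clean without re-deriving the subelliptic machinery from scratch: one must be careful that the intermediate regularity is measured in Euclidean Sobolev spaces on coordinate charts and that the commutators $[L, \Lambda^{s}]$ with a properly supported order-$s$ pseudodifferential operator $\Lambda^s$ remain controlled — this is where the smoothness of $X_i$ and $c_i$ on $U$ (guaranteed away from $\mathcal Z$) is used. A slicker route, which I would adopt to keep the paper self-contained, is to quote directly the hypoellipticity-with-loss-of-derivatives result for sums of squares (plus lower order terms), stated in the form: if $Lu \in H^t_{\loc}$ then $u \in H^{t + 2s}_{\loc}$ for the subelliptic gain $s>0$, so that $Lu = f \in L^2_{\loc} = H^0_{\loc}$ forces $u \in H^{2s}_{\loc}$, and then iterating $N$ times with $2Ns \geq 1$ gives $u \in H^1_{\loc}(U)$. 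Either way, the conclusion $\dom(H^*) \subseteq W^1_{\loc}(M)$ follows, and the only genuinely sub-Riemannian input is the existence of the positive subelliptic exponent $s$, a direct consequence of the Lie bracket generating condition \eqref{eq:bracketc}.
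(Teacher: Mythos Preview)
Your overall architecture---reduce to a relatively compact coordinate chart, write $\Delta_\omega=\sum_i X_i^2+X_0$ with a smooth horizontal drift, invoke a subelliptic regularity result, then cover by finitely many charts---is exactly the paper's. The problem is the regularity step itself.

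The bootstrap to Euclidean $H^1$ does not work. Applying your estimate $\|v\|_{H^s}\le C(\|Lv\|_{L^2}+\|v\|_{L^2})$ to $v=\Lambda^s u$ requires $L\Lambda^s u\in L^2$; but $L\Lambda^s u=\Lambda^s(Lu)+[L,\Lambda^s]u$, where $\Lambda^s(Lu)\in H^{-s}$ (you only know $Lu\in L^2$) and the commutator has order $s{+}1$, so it sends $u\in H^s$ to $H^{-1}$. Neither term lands in $L^2$, and the inequality cannot be closed. Your ``slicker route'' has the same defect: the hypoelliptic gain $Lu\in H^t_\loc\Rightarrow u\in H^{t+2s}_\loc$ cannot be iterated, because $Lu$ never leaves $H^0$, so $t$ stays at $0$ forever. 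In fact the target is wrong: when the step $r$ is at least $3$, the sharp embedding of the non-isotropic space $S^{2}_{2}$ into the Euclidean scale gives only $H^{2/r}_\loc$, so $Lu\in L^2_\loc$ does \emph{not} force $u\in H^1_\loc$ in general.

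The fix---and this is precisely what the paper does---is to stay in the non-isotropic (horizontal) Sobolev scale throughout. Rothschild--Stein \cite[Thm.\ 18.d]{RS76} yields in one stroke that $\mathcal{L}u\in L^2_\loc$ implies $X_i u\in L^2_\loc$ for each $i$, i.e.\ $u\in W^1_\loc(U,dx)$. No detour through Euclidean $H^1$ is needed (or available), and no iteration is involved. Replace your bootstrap paragraph by this single citation and the argument is complete and coincides with the paper's.
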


\begin{proof}
If $u \in \dom(H^*)$, then $\Delta_\omega u \in L^2_\loc(M)$. Let $f=\Delta_\omega u$ (in the weak sense) and let $U$ be a relatively compact coordinate domain of $M$. Then, $f\in L^2(U)$, and, since $\omega$ is a smooth measure on $M$, $f\in L^2(U, dx)$, where $dx$ denotes the Lebesgue measure on $U$. Notice that $\Delta_\omega$ can be written in the form $\mathcal L=\sum_{i=1}^rX_i^2+X_0$ where $X_1,\dots, X_r$ is a local generating family and  $X_0$ is a horizontal vector field. 
Then, by Rotschild and Stein subellipticity theory for $\mathcal L$ (see \cite[Thm.\ 18.d]{RS76}), $u\in W^{1}_\mathrm{loc}(U,dx)$, implying $u\in W^{1}_\mathrm{loc}(U)$.
We deduce that $u\in W^1_\mathrm{loc}(M)$. In fact, if $K\subseteq M$ is a relatively compact domain, we can cover it with a finite number of coordinate charts $U_1,\dots, U_m$, with $K\cap U_i$ relatively compact. In particular, $u\in W^1(K\cap U_i)$ for any $i=1,\dots,m$, implying $u\in W^1_\mathrm{loc}(M)$.
\end{proof}

\begin{lemma}[Sub-Riemannian Rellich-Kondrachov theorem]
\label{l:RK}
Let $M$ be a sub-Rie\-mann\-ian manifold equipped with a smooth measure $\omega$. Let $\Omega \subseteq M$ be a compact domain with Lipschitz boundary. Then $W^1(\Omega)$ is compactly embedded into $L^2(\Omega)$. 
\end{lemma}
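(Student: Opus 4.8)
The plan is to reduce the statement to the classical Rellich–Kondrachov compactness theorem in Euclidean space by a localization argument, using the fact that compact support and a finite cover of coordinate charts are available. First I would fix a relatively compact domain $\Omega \subseteq M$ with Lipschitz boundary and cover $\overline{\Omega}$ by finitely many coordinate charts $U_1, \dots, U_m$; choosing the charts carefully so that, on each $U_j$, the sub-Riemannian structure is generated by a finite family of smooth vector fields $X_1^{(j)}, \dots, X_r^{(j)}$ satisfying Hörmander's bracket-generating condition, and $\omega$ has a smooth, bounded, bounded-away-from-zero density with respect to Lebesgue measure $dx$. On each chart, the key point is the subelliptic estimate of Rothschild–Stein (as already invoked in Lemma~\ref{l:shubin}): there is an $\epsilon > 0$ such that, for $u$ supported in a fixed compact subset of $U_j$,
\begin{equation}
\|u\|_{H^{\epsilon}(U_j, dx)} \leq C\left(\|u\|_{L^2(U_j,dx)} + \sum_{i=1}^r \|X_i^{(j)} u\|_{L^2(U_j,dx)}\right),
\end{equation}
where $H^\epsilon$ is the classical (Euclidean) fractional Sobolev space. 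Equivalently, the horizontal gradient controls a fractional-order-$\epsilon$ amount of isotropic regularity.

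The second step is to assemble the local estimates into a global one. Using a smooth partition of unity $\{\chi_j\}$ subordinate to the cover, with $\sum_j \chi_j \equiv 1$ on $\overline{\Omega}$, and noting that $X_i^{(j)}(\chi_j u) = \chi_j X_i^{(j)} u + (X_i^{(j)} \chi_j) u$, one controls each $\|\chi_j u\|_{H^\epsilon(U_j, dx)}$ by $C\|u\|_{W^1(\Omega)}$ (here using that the $W^1$-norm computed with $\omega$ and with $dx$ are equivalent on $\overline{\Omega}$, since the density of $\omega$ is comparable to $1$ there). Hence the linear map $u \mapsto (\chi_1 u, \dots, \chi_m u)$ embeds $W^1(\Omega)$ continuously into $\bigoplus_{j=1}^m H^\epsilon(U_j, dx)$. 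Since each $\chi_j u$ is supported in a fixed compact set, the classical Rellich–Kondrachov theorem gives that $H^\epsilon$ of that compact set embeds compactly into $L^2$; therefore the composite $W^1(\Omega) \hookrightarrow \bigoplus_j H^\epsilon \hookrightarrow \bigoplus_j L^2$ is compact. Finally, $u = \sum_j \chi_j u$ recovers $u$ in $L^2(\Omega)$ from its components, so composing with the bounded summation map $\bigoplus_j L^2 \to L^2(\Omega)$ shows $W^1(\Omega) \hookrightarrow L^2(\Omega)$ is compact. Concretely: given a bounded sequence $(u_n)$ in $W^1(\Omega)$, each $(\chi_j u_n)_n$ is bounded in $H^\epsilon$, so has an $L^2(U_j,dx)$-convergent subsequence; by a diagonal argument extract a common subsequence along which all $\chi_j u_n$ converge in $L^2(U_j,dx) \cong L^2(U_j, \omega)$, and then $u_n = \sum_j \chi_j u_n$ converges in $L^2(\Omega)$.

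The main obstacle I anticipate is handling the boundary of $\Omega$ correctly. The Rothschild–Stein estimate and the classical Rellich theorem are cleanest for compactly supported functions (or functions on an open set, estimated on compact subsets), whereas $W^1(\Omega)$ consists of functions defined up to $\partial\Omega$, which is only Lipschitz. The clean way around this is to invoke a sub-Riemannian extension theorem: functions in $W^1(\Omega)$ on a domain with Lipschitz boundary extend to $W^1$ of a slightly larger relatively compact domain $\Omega' \supseteq \overline{\Omega}$, with norm control (this is standard for NI-domains / Lipschitz domains in the Carnot–Carathéodory setting, cf. the literature on Sobolev spaces associated to Hörmander vector fields). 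After extending, the cutoff functions $\chi_j$ can be taken to vanish near $\partial\Omega'$, so every $\chi_j u$ is genuinely compactly supported in $U_j$ and the two previous steps apply verbatim; restricting the resulting $L^2(\Omega')$-convergence back to $\Omega$ finishes the argument. A mild alternative, avoiding an extension theorem, is to prove compactness of $W^1_0(\Omega') \hookrightarrow L^2(\Omega')$ directly for an auxiliary smooth domain $\Omega' \Supset \Omega$ and note this suffices for all applications in Section~\ref{sec:main-criterion}, where the relevant domains arise as sublevel sets $\{\delta < \epsilon\}$ whose closures sit inside larger such sets.
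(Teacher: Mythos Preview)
Your proposal is correct and shares with the paper the essential analytic ingredient: Rothschild--Stein subellipticity (\cite[Thm.~13]{RS76}) gives a continuous embedding of the horizontal $W^1$ space into an isotropic fractional Sobolev space $W^{1/s,2}_{\mathrm{iso}}$, from which classical Rellich--Kondrachov yields compactness in $L^2$. The localization strategies, however, differ. The paper does \emph{not} use a partition of unity or an extension theorem; instead it covers $\Omega$ by coordinate charts $U_1,\dots,U_N$ chosen so that each $U_\ell\cap\Omega$ has Lipschitz boundary, applies the Rothschild--Stein estimate and the classical Rellich--Kondrachov theorem directly on each Lipschitz piece $U_\ell\cap\Omega$, and then extracts a common convergent subsequence by successive refinement across the finitely many pieces. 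Your route---extend across $\partial\Omega$, multiply by compactly supported cutoffs $\chi_j$, and sum---is slightly more robust in that it sidesteps the (mildly delicate) geometric claim that the cover can be chosen with each $U_\ell\cap\Omega$ Lipschitz, at the cost of invoking a sub-Riemannian extension result. Your fallback observation, that for the applications in Section~\ref{sec:main-criterion} only compactness of $W^1_0$ on smooth sublevel sets of $\delta$ is needed, is also correct and would in fact let you dispense with both the extension theorem and the Lipschitz hypothesis altogether.
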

\proof {\em Step 1.}
Let $U\subseteq M$ be a coordinate neighborhood such that $U\cap \Omega$ has Lipschitz boundary and let $w_j$ be a sequence bounded in $ W^1(\Omega)$.
Since $\omega$ is smooth, this is  equivalent to say that $w_j$ is bounded in $W^1(U\cap \Omega,dx)$, where $dx$ denotes the Lebesgue measure on $U$. By \cite[Thm 13]{RS76} (and estimates therein), if $s$ denotes the step of the sub-Riemannian structure on $\Omega$,  $W^1(U\cap \Omega,dx)$ is compactly embedded into the isotropic fractional Sobolev space $W^{1/s,2}_{\mathrm{iso}}(U\cap \Omega,dx)$. This is defined considering fractional derivatives in every coordinate direction (and not just in the horizontal ones). Then, by the classical Rellich-Kondrachov theorem applied to set $U\cap \Omega$, whose boundary is Lipschitz, we can extract a subsequence $w_{j_\ell}$ of $w_j$ converging in $L^2(U\cap\Omega,dx)$, hence in $L^2(\Omega\cap U)$.

{\em Step 2.} Let $u_j$ be a sequence bounded in $W^1(\Omega)$ and let $\Omega=\bigcup_{\ell=1}^NU_\ell$ be a covering of $\Omega$ where each $U_\ell$ is a coordinate domain. Then $u_j$ is bounded in $W^{1}(U_\ell)$ for every $\ell$. Without loss of generality we can assume $U_\ell\cap \Omega$ to have Lipschitz boundary for every $\ell$. 
By Step 1, we can extract from $u_{j}$ a subsequence $u_{j_1(k)}$ converging in $L^2(\Omega\cap U_1)$. Similarly, from $u_{j_1(k)}$ we extract a subsequence $u_{j_2(k)}$ converging in $L^2(\Omega\cap U_2)$. By repeating this procedure for every $\ell$ we obtain a subsequence $u_{j_N(k)}$ of $u_j$ converging in $L^2(\Omega\cap U_\ell)$ for every $\ell=1,\dots, N$. This implies that $u_{j_N(k)}$ converges in $L^2(\Omega)$, as claimed.
\endproof

\subsection{Agmon-type estimates and weak Hardy inequality}

Recall that the symmetric bilinear form associated with $H$ is
\begin{equation}\label{eq:dirichletform}
	\mathcal{E}(u,v)=\int_M g(\nabla u, \nabla v)  \,d\omega, \qquad u,v \in C^\infty_c(M).
\end{equation}
We use the same symbol to denote the above integral for all functions $u, v \in W^1_\loc(M)$, when it is convergent. We also let, for brevity, $\mathcal{E}(u) = \mathcal{E}(u,u) \geq 0$. 

\begin{lemma}\label{l:trick}
Let $M$ be a sub-Riemannian manifold equipped with a smooth measure $\omega$.
Let $f$ be a real-valued function, Lipschitz w.r.t.\ the sub-Riemannian distance. Let $u \in W^1_\loc(M)$, and assume that $f$ or $u$ have compact support $K \subset M$. Then, we have
\begin{equation}\label{eq:keyidentity}
\mathcal{E}(f u,fu) = \re \mathcal{E}(u,f^2 u ) + \langle u , |\nabla f|^2 u \rangle.
\end{equation}

Moreover, if $\psi \in \dom(H^*)$ satisfies $H^* \psi = E \psi$, and $f$ has compact support, we have
\begin{equation}\label{eq:keyidentity2}
\mathcal{E}(f \psi,f\psi) = E \| f \psi \|^2 + \langle \psi , |\nabla f|^2 \psi \rangle.
\end{equation}
\end{lemma}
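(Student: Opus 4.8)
The plan is to prove the two identities \eqref{eq:keyidentity} and \eqref{eq:keyidentity2} by a density/approximation argument, reducing everything to the case of smooth compactly supported functions where the computation is a plain integration by parts, and then passing to the limit using the Lipschitz regularity of $f$ and the $W^1_\loc$ regularity of $u$.

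\medskip

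\emph{Step 1: the smooth case.} First I would verify \eqref{eq:keyidentity} when $u \in C^\infty_c(M)$ and $f \in C^\infty_c(M)$ is real-valued. Write $g(\nabla(fu),\nabla(\overline{fu})) = g(\nabla(fu), f\nabla\bar u + \bar u \nabla f)$. Expanding the left-hand side and using the Leibniz rule $\nabla(fu) = f\nabla u + u\nabla f$, one gets
\begin{equation*}
g(\nabla(fu),\nabla(fu)) = f^2 |\nabla u|^2 + 2\re\big(\bar u\, f\, g(\nabla u,\nabla f)\big) + |u|^2 |\nabla f|^2.
\end{equation*}
On the other hand, $\re g(\nabla u, \nabla(f^2\bar u)) = \re g(\nabla u, f^2\nabla\bar u + 2f\bar u\nabla f) = f^2|\nabla u|^2 + 2\re(\bar u f g(\nabla u,\nabla f))$. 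Subtracting, the cross terms cancel and \eqref{eq:keyidentity} follows by integrating. Here I am using that for $u \in C^\infty_c$ one has $f^2 u \in C^\infty_c$ and $\mathcal{E}(u,f^2 u)$ is literally the Dirichlet form, so no integration by parts with boundary terms is needed — the identity is pointwise before integration.

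\medskip

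\emph{Step 2: approximation.} Now take general $f$ Lipschitz and $u \in W^1_\loc(M)$ with $f$ or $u$ compactly supported in $K$. By \eqref{eq:grad} the sub-Riemannian gradient of a Lipschitz function exists a.e.\ with $|\nabla f| \le \mathrm{Lip}(f)$, and by a standard mollification (in local coordinates, using a partition of unity subordinate to a finite cover of a neighborhood of $K$) one can find $f_n \in C^\infty$ with $f_n \to f$ uniformly on $K$, $\nabla f_n \to \nabla f$ in $L^2(K)$, and $\mathrm{Lip}(f_n)$ uniformly bounded; similarly $u_n \in C^\infty$ with $u_n \to u$ in $W^1(K)$. (If only $f$ is compactly supported one keeps $u$ fixed and truncates/mollifies it only on a neighborhood of $\supp f$; if only $u$ is compactly supported one mollifies $f$ only near $\supp u$.) All four terms in \eqref{eq:keyidentity} are continuous under these convergences: the product terms $f_n u_n$ are bounded in $W^1(K)$ and converge in $W^1(K)$ since $\nabla(f_n u_n) = f_n\nabla u_n + u_n\nabla f_n$ converges in $L^2(K)$ by the uniform bounds (using that $u_n \to u$ in $L^2$ and $\nabla f_n \to \nabla f$ in $L^2$ together with uniform $L^\infty$ bounds on $f_n$ near $K$). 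Passing to the limit in the identity proved in Step 1 yields \eqref{eq:keyidentity}.

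\medskip

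\emph{Step 3: the eigenfunction identity.} For \eqref{eq:keyidentity2}, let $\psi \in \dom(H^*)$ with $H^*\psi = E\psi$. By Lemma~\ref{l:shubin}, $\psi \in W^1_\loc(M)$, so \eqref{eq:keyidentity} applies with $u = \psi$ and $f$ compactly supported. It remains to identify $\re\mathcal{E}(\psi, f^2\psi)$ with $E\|f\psi\|^2$. Since $f^2\psi \in W^1_0(M)$ is compactly supported (it lies in the form domain of $H$, being approximable by $C^\infty_c$ functions as in Step 2) and $\psi \in \dom(H^*)$, the definition of the adjoint gives $\mathcal{E}(\psi, f^2\psi) = \langle \psi, -\Delta_\omega(f^2\psi)\rangle$... more precisely $\mathcal{E}(g,\psi) = \langle g, H^*\psi\rangle$ for $g$ in the form domain, hence $\mathcal{E}(f^2\psi,\psi) = \langle f^2\psi, H^*\psi\rangle = \langle f^2\psi, E\psi\rangle = E\|f\psi\|^2$ (note $E$ is real since $H$ is symmetric and bounded below on $C^\infty_c$, or one simply takes the real part), and $\re\mathcal{E}(\psi,f^2\psi) = \re\overline{\mathcal{E}(f^2\psi,\psi)} = \re(E\|f\psi\|^2) = E\|f\psi\|^2$. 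Substituting into \eqref{eq:keyidentity} gives \eqref{eq:keyidentity2}.

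\medskip

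\emph{Main obstacle.} The delicate point is Step 2: one must ensure that the mollified objects $f_n u_n$ genuinely converge in $W^1$ near $K$ and that $f^2\psi$ lies in the form domain $W^1_0$ of $H$ (needed to invoke the adjoint relation in Step 3), since $\psi$ a priori is only in $W^1_\loc$, not globally $W^1$. The cutoff by compactly supported $f$ is exactly what rescues this — but one has to check carefully that mollification of a $W^1_\loc$ function against a Lipschitz cutoff produces $C^\infty_c$ approximants converging in $W^1$ on the relevant compact set, which requires the smoothness of $\omega$ (to pass between $L^2(\omega)$ and $L^2(dx)$ in local coordinates) exactly as used in Lemma~\ref{l:shubin}.
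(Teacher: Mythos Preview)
Your proposal is correct, but it takes a genuinely different route from the paper. The paper does \emph{not} argue by approximation: it observes once that $|\nabla f|\in L^\infty$ (so $fu\in W^1_\comp(M)$), and then applies the Leibniz rule $\nabla(fu)=f\nabla u+u\nabla f$ directly to $u\in W^1_\loc$ and Lipschitz $f$, expanding $\langle\nabla u,\nabla(f^2 u)\rangle$ algebraically in the $L^2$ inner product until the imaginary part separates off and \eqref{eq:keyidentity} drops out. For \eqref{eq:keyidentity2} it again works directly, writing $\mathcal{E}(u,f^2u)=\langle\nabla u,\nabla(f^2u)\rangle=\langle H^*u,f^2u\rangle$ via the distributional definition of $\Delta_\omega$ and $\dom(H^*)\subseteq W^1_\loc$ from Lemma~\ref{l:shubin}. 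Your approximation scheme (mollify $f$ and $u$, check $W^1$-convergence of the products, pass to the limit) buys you an honest reduction to the $C^\infty_c$ case and makes explicit the fact that $f^2\psi\in W^1_0(M)$---a point the paper uses silently in its integration by parts. The paper's approach is shorter and avoids the Friedrichs-commutator bookkeeping you flag in your ``main obstacle'', since the Leibniz rule for a product of a $W^1_\loc$ function with a $W^{1,\infty}_\loc$ function is standard and requires no smoothing.
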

\begin{proof}
Observe that $|\nabla f|$ is essentially bounded by \cite[Thm.\ 8]{FHK99} (see also \cite[Prop.\ 2.9]{FSSC97}, \cite[Thm.\ 1.3]{GN98}). Hence $fu \in W^1_\comp(M)$. By using the fact that $f$ is real-valued, a straightforward application of Leibniz rule yields
\begin{align}
\langle  \nabla u, \nabla (f^2 u)\rangle & = \langle  f\nabla u, \nabla (f u)\rangle  + \langle \nabla u, f u \nabla f  \rangle \\
& = \langle \nabla (fu), \nabla (fu)\rangle - \langle u \nabla f,\nabla (fu) \rangle+ \langle \nabla u, f u \nabla f  \rangle \\
& = \langle \nabla (fu), \nabla (fu)\rangle - \langle u \nabla f,u\nabla f \rangle- \langle u \nabla f, f\nabla u \rangle +\langle f \nabla u,  u \nabla f  \rangle  \\
& = \langle \nabla (fu), \nabla (fu)\rangle - \langle u, |\nabla f|^2 u\rangle + 2 i \im \langle f \nabla u,  u \nabla f  \rangle.
\end{align}
Thus, by definition of $\mathcal{E}$, we have
\begin{align}
\re \mathcal{E}(u,f^2 u) & =  \langle \nabla (fu), \nabla (fu)\rangle  - \langle u, |\nabla f|^2 u \rangle = \mathcal{E}(fu, fu) - \langle u, |\nabla f|^2 u \rangle,
\end{align}
completing the proof of \eqref{eq:keyidentity}.

To prove \eqref{eq:keyidentity2}, recall that, by Lemma \ref{l:shubin}, $\dom(H^*) \subseteq W^1_\loc(M)$.
Then we obtain
\begin{align}
\mathcal{E}(u,f^2 u) & =  \langle \nabla u , \nabla (f^2 u) \rangle = \langle-\Delta_\omega u, f^2 u\rangle = \langle H^* u, f^2 u \rangle .
\end{align}
Setting $u= \psi$, we obtain $\mathcal{E}(\psi,f^2 \psi) = E \|f \psi\|^2$, yielding the statement.
\end{proof}

We show how to compute $V_{\mathrm{eff}}$ through the diffeomorphism $F$ given by Proposition \ref{prop:smoothness}.
\begin{prop}\label{prop:meas}
Using the diffeomorphism of Proposition \ref{prop:smoothness} to identify $M_\varepsilon\simeq (0,\varepsilon)\times X_\varepsilon$, we have
\begin{equation}
d\omega(t,q)=e^{2\theta(t,q)}dt\;d\mu(q),\qquad (t,q)\in M_\varepsilon,
\end{equation}
where $d\mu$ is a fixed smooth measure on $X_\varepsilon$, and $\theta$ is a smooth function. Moreover,
\begin{equation}
\label{eq:Veff}
\Veff=(\partial_t\theta)^2+\partial_t^2\theta.
\end{equation}
\end{prop}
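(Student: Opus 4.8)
The plan is to use the diffeomorphism $F:(0,\varepsilon)\times X_\varepsilon\to M_\varepsilon$ from Proposition~\ref{prop:smoothness} to reduce everything to a one-dimensional computation along the gradient flow lines of $\delta$. First, I would pull back the smooth measure $\omega$ via $F$. Since $F$ is a smooth diffeomorphism, $F^*\omega$ is a smooth positive measure on $(0,\varepsilon)\times X_\varepsilon$; after fixing once and for all an arbitrary smooth reference measure $d\mu$ on the hypersurface $X_\varepsilon$, we may write $F^*\omega = \rho(t,q)\,dt\,d\mu(q)$ for a smooth positive density $\rho$. Setting $\theta := \tfrac{1}{2}\log\rho$, which is smooth since $\rho>0$, gives the first claimed formula $d\omega(t,q)=e^{2\theta(t,q)}\,dt\,d\mu(q)$. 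Note that under this identification $\delta(t,q)=t$ and, by Proposition~\ref{prop:smoothness}(iii), $F_*\partial_t=\nabla\delta$, so the ``prime'' (derivative along $\nabla\delta$) becomes simply $\partial_t$.

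The core of the argument is computing $\Delta_\omega\delta$ in these coordinates. The key observation is that $\Delta_\omega\delta = \dive_\omega(\nabla\delta) = \dive_\omega(\partial_t)$, where we use $|\nabla\delta|\equiv 1$ on $M_\varepsilon$ and $\nabla\delta = F_*\partial_t$. For any smooth vector field $Y$ and any smooth function $\phi$, the divergence with respect to $e^{2\theta}\,dt\,d\mu$ relates to the divergence with respect to $dt\,d\mu$ by $\dive_{e^{2\theta}dt\,d\mu}(Y)=\dive_{dt\,d\mu}(Y)+2\,Y(\theta)$; applying this to $Y=\partial_t$, and noting $\dive_{dt\,d\mu}(\partial_t)=0$ (since the coefficient of $\partial_t$ is the constant $1$ and the $dt\,d\mu$ part is a product measure with $\partial_t$ acting only on the $dt$ factor), we get
\begin{equation}
\Delta_\omega\delta = 2\,\partial_t\theta.
\end{equation}
Hence $\tfrac{\Delta_\omega\delta}{2}=\partial_t\theta$, and plugging into the definition $\Veff=\left(\tfrac{\Delta_\omega\delta}{2}\right)^2+\left(\tfrac{\Delta_\omega\delta}{2}\right)'$ with $'=\partial_t$ yields immediately $\Veff=(\partial_t\theta)^2+\partial_t^2\theta$, which is \eqref{eq:Veff}.

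The step requiring the most care is the identity $\dive_{e^{2\theta}dt\,d\mu}(\partial_t)=2\,\partial_t\theta$, which I want to justify cleanly rather than by a coordinate computation that conflates the intrinsic divergence with a Lebesgue divergence. The safe route is to recall the characterization $\mathcal L_Y(\omega)=\dive_\omega(Y)\,\omega$ for the Lie derivative of the volume form (or, equivalently, the integration-by-parts defining property $\int_M Y(\phi)\,d\omega = -\int_M \phi\,\dive_\omega(Y)\,d\omega$ for $\phi\in C^\infty_c$); with $\omega = e^{2\theta}\,dt\,d\mu$ and $Y=\partial_t$, Leibniz gives $\mathcal L_{\partial_t}(e^{2\theta}dt\,d\mu)=\big(\partial_t(e^{2\theta})\big)\,dt\,d\mu + e^{2\theta}\mathcal L_{\partial_t}(dt\,d\mu) = 2\partial_t\theta\cdot e^{2\theta}\,dt\,d\mu$, using $\mathcal L_{\partial_t}(dt)=0$ and $\mathcal L_{\partial_t}(d\mu)=0$ (the latter because $d\mu$ is pulled back from $X_\varepsilon$ and $\partial_t$ is tangent to the $(0,\varepsilon)$-factor). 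This is the only place where the product structure $F_*\partial_t=\nabla\delta$ and $\delta\circ F = t$ from Proposition~\ref{prop:smoothness} is genuinely used, and once it is in hand the rest is a two-line substitution.
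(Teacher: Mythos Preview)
Your proposal is correct and follows essentially the same route as the paper: identify $\nabla\delta$ with $\partial_t$ via Proposition~\ref{prop:smoothness}, then compute $\Delta_\omega\delta=\dive_\omega(\partial_t)$ using the Lie derivative characterization $\mathcal L_{\partial_t}\omega=(\dive_\omega\partial_t)\,\omega$ together with $\mathcal L_{\partial_t}(dt\,d\mu)=0$, obtaining $\Delta_\omega\delta=2\partial_t\theta$. If anything, you are slightly more explicit than the paper in justifying the existence of $\theta$ and in spelling out why $\mathcal L_{\partial_t}(d\mu)=0$.
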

\begin{proof}
We prove \eqref{eq:Veff}. Through the identification $M_\varepsilon\simeq (0,\varepsilon)\times X_\varepsilon$ we have $\nabla \delta(t,q)=\partial_t$. Then, by definition of $\dive_{\omega}$ we have
\begin{equation}
\begin{split}
(\Delta_\omega\delta(t,q))\omega&=\dive_\omega(\partial_t)\omega=\mathcal L_{\partial_t}\omega=\mathcal L_{\partial_t}(e^{2\theta(t,q)}dt\;d\mu(q))\\
&=2\partial_t\theta(t,q)d\omega+e^{2\theta}\mathcal L_{\partial_t}(dt\;d\mu(q))=
2\partial_t\theta(t,q)d\omega,
\end{split}
\end{equation}
where we used $\mathcal L_{\partial_t}(dt\,d\mu(q))=0$. Moreover, in these coordinates, derivation in the direction of $\nabla\delta$ amounts to the derivation w.r.t.\ $t$, hence
\begin{equation}
\left(\Delta_\omega\delta(t,q)\right)'=2\partial_t^2\theta. \qedhere
\end{equation}
\end{proof}

\begin{prop}[Weak Hardy Inequality]\label{prop:weak-hardy}
Let $N$ be a complete sub-Rie\-mann\-ian manifold endowed with a measure $\omega$. Assume $\omega$ to be smooth on $M=N\setminus \mathcal{Z}$, where the singular set $\mathcal Z$ is a smooth, embedded, compact hypersurface with no characteristic points.
Assume also that there exist $\kappa \geq 0$ and $\varepsilon>0$ such that
	\begin{align}\label{eq:assumptionprop}
\Veff & \geq \frac{3}{4\delta^2}-\frac{\kappa}{\delta},\qquad \text{for $\delta \leq \varepsilon$}.
	\end{align}
Then, there exist $\eta \leq 1/\kappa$ and $c \in \R$ such that
	\begin{equation}\label{eq:weak-hardy}
\int_M |\nabla u|^2\;d\omega  \geq	\int_{M_{\eta}} \left(\frac{1}{\delta^2} - \frac{\kappa}{\delta} \right)|u|^2\,d\omega  +c \|u\|^2, \qquad \forall u\in W^1_\comp(M),
	\end{equation}
where $M_\eta=\{ 0<\delta<\eta\}$. In particular, the operator $H=-\Delta_\omega$ is semibounded on $C^\infty_c(M)$.
\end{prop}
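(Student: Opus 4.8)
The plan is to reduce everything to a one-dimensional Hardy-type inequality on the tubular neighborhood $M_\varepsilon$, exploiting the product structure $M_\varepsilon \simeq (0,\varepsilon) \times X_\varepsilon$ and the measure decomposition $d\omega = e^{2\theta}\,dt\,d\mu$ from Proposition \ref{prop:meas}. First I would dispose of the region away from $\mathcal Z$: by Proposition \ref{prop:smoothness}(ii) the function $\delta$ is smooth on $M_\varepsilon$ with $|\nabla\delta|\equiv 1$, so on $M\setminus M_{\eta}$ the weight $1/\delta^2 - \kappa/\delta$ is bounded, and the corresponding contribution can be absorbed into the $c\|u\|^2$ term; thus it suffices to prove the inequality for $u$ supported in $M_\eta$ with $\eta$ small. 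It is enough to treat real-valued $u$ (splitting into real and imaginary parts), and since $|\nabla u|^2 \geq |\partial_t u|^2 = |g(\nabla u, \nabla\delta)|^2$ where $t = \delta$ is the flow parameter, it suffices to bound $\int_{M_\eta} |\partial_t u|^2\, d\omega$ from below.

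Next I would perform the substitution that linearizes the problem: on $M_\eta$ write $u(t,q) = e^{-\theta(t,q)} v(t,q)$, so that, using $d\omega = e^{2\theta}\,dt\,d\mu$,
\begin{equation}
\int_{M_\eta} |\partial_t u|^2\, d\omega = \int_{X_\eta}\!\int_0^\eta \big( \partial_t v - (\partial_t\theta) v\big)^2\, dt\, d\mu
= \int_{X_\eta}\!\int_0^\eta \Big( (\partial_t v)^2 - \partial_t\big( (\partial_t\theta) v^2 \big) + \big((\partial_t\theta)^2 + \partial_t^2\theta\big) v^2 \Big)\, dt\, d\mu.
\end{equation}
By Proposition \ref{prop:meas}, $(\partial_t\theta)^2 + \partial_t^2\theta = \Veff$, and the exact-derivative term integrates (in $t$) to a boundary contribution at $t = \eta$ only, since $v = e^\theta u$ vanishes near $t = 0$; that boundary term is again controlled by $c\|u\|^2$. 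Invoking the hypothesis \eqref{eq:assumptionprop}, $\Veff \geq \tfrac{3}{4\delta^2} - \tfrac{\kappa}{\delta} = \tfrac{3}{4t^2} - \tfrac{\kappa}{t}$, we are reduced to the classical one-dimensional inequality: for $v \in C_c^\infty(0,\eta)$,
\begin{equation}
\int_0^\eta (\partial_t v)^2\, dt + \int_0^\eta \Big(\frac{3}{4t^2} - \frac{\kappa}{t}\Big) v^2\, dt \geq \int_0^\eta \Big( \frac{1}{t^2} - \frac{\kappa}{t}\Big) v^2\, dt,
\end{equation}
i.e. $\int_0^\eta (\partial_t v)^2\, dt \geq \int_0^\eta \tfrac{1}{4t^2} v^2\, dt$, which is precisely the sharp Hardy inequality on the half-line. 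Translating back via $v = e^\theta u$ and integrating in $q \in X_\eta$ against $d\mu$ yields \eqref{eq:weak-hardy} with the stated constants, the condition $\eta \leq 1/\kappa$ guaranteeing that the weight $\tfrac{1}{t^2} - \tfrac{\kappa}{t}$ on the right-hand side is nonnegative (so that restricting from $M_\eta$ to $M_\varepsilon$, or extending by zero, is harmless). Semiboundedness of $H$ on $C_c^\infty(M)$ then follows since $\mathcal E(u) = \int_M |\nabla u|^2\, d\omega \geq \int_{M_\eta}(\tfrac{1}{\delta^2} - \tfrac{\kappa}{\delta})|u|^2 + c\|u\|^2 \geq c\|u\|^2$, using $\tfrac{1}{\delta^2} - \tfrac{\kappa}{\delta} \geq 0$ on $M_\eta$.

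The main obstacle is bookkeeping the passage between $u$ and $v = e^\theta u$ when $u \in W^1_\comp(M)$ rather than smooth: one must justify that $v$ inherits enough regularity for the integration by parts in $t$ to hold, that the boundary term at $t = \eta$ genuinely is of the form $O(\|u\|^2)$ uniformly in $q$ (which uses smoothness and boundedness of $\theta$ and its $t$-derivatives on the compact set $\overline{M_{\eta}}$, available because $\mathcal Z$, hence $X_\eta$, is compact), and that the reduction $|\nabla u|^2 \geq |\partial_t u|^2$ is legitimate in the distributional sense. A cleaner route is to first prove \eqref{eq:weak-hardy} for $u \in C_c^\infty(M)$ by the above computation and then extend to $W^1_\comp(M)$ by density, noting that both sides of \eqref{eq:weak-hardy} are continuous under the $W^1$-norm once the weight $\tfrac{1}{\delta^2} - \tfrac{\kappa}{\delta}$ is bounded on the support (which can be arranged since any fixed $u \in W^1_\comp$ has support in some $M \setminus M_{\eta'}$ — wait, not so; rather one uses that the weight, though singular, is dominated near $\mathcal Z$ by $\Veff + \text{const}$, and $\int \Veff |u|^2$ is controlled via the identity above). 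I expect this density/regularity step to be the only genuinely delicate point; the geometric input is entirely packaged in Propositions \ref{prop:smoothness} and \ref{prop:meas}.
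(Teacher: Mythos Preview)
Your core strategy---tubular coordinates, the substitution $v=e^{\theta}u$ to pass to the flat measure $dt\,d\mu$, and reduction to the one-dimensional Hardy inequality $\int_0^\eta |\partial_t v|^2\,dt\ge \tfrac14\int_0^\eta v^2/t^2\,dt$---is exactly the paper's. The gap is in the localization step.

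You write that ``it suffices to prove the inequality for $u$ supported in $M_\eta$'' because the weight is bounded on $M\setminus M_\eta$. But the right-hand side of \eqref{eq:weak-hardy} already lives only on $M_\eta$, so boundedness of the weight elsewhere is irrelevant; and restricting a general $u\in W^1_\comp(M)$ to $M_\eta$ does not produce a function in $W^1_\comp(M_\eta)$. A cutoff is needed, and cutting off costs gradient energy. The paper handles this with the IMS-type identity of Lemma~\ref{l:trick}: choosing smooth $\phi_1,\phi_2$ with $\phi_1^2+\phi_2^2=1$, $\phi_1\equiv 1$ on $M_{\varepsilon/2}$, $\supp\phi_1\subset M_\varepsilon$, one gets $\mathcal E(u)=\mathcal E(\phi_1 u)+\mathcal E(\phi_2 u)-\int(|\nabla\phi_1|^2+|\nabla\phi_2|^2)|u|^2\ge \mathcal E(\phi_1 u)-c_1\|u\|^2$, and then applies Step~1 to $\phi_1 u\in W^1_\comp(M_\varepsilon)$. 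This is precisely where the constant $c$ in \eqref{eq:weak-hardy} comes from.

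This localization also dissolves your boundary-term issue. Since $\phi_1 u$ has compact support in the \emph{open} tube $M_\varepsilon$, so does $v=e^\theta\phi_1 u$, and the integration by parts in $t$ produces no boundary contribution at either endpoint; your claim that a surface integral $\int_{X_\eta}(\partial_t\theta)v^2\,d\mu$ is $O(\|u\|^2)$ would require a trace inequality and is not immediate. Finally, the paper sidesteps your density worry by working directly in $W^1_\comp$: for a.e.\ $q$ the slice $t\mapsto v(t,q)$ lies in $W^1_\comp((0,\varepsilon))$ (Fubini, cf.\ \cite[Thm.~4.21]{EGmeas}), so the one-dimensional Hardy inequality applies slice-by-slice without approximation.
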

\begin{proof} By Proposition \ref{prop:smoothness} there exists $\varepsilon >0$ such that $\delta$ is smooth on $M_\varepsilon=\{0<\delta<\varepsilon\}$.

First we prove \eqref{eq:weak-hardy} for $u \in W^{1}_{\comp}(M_{\varepsilon})$, and with $\eta = \varepsilon$, possibly not satisfying $\eta \leq 1/\kappa$. Then, we extend it for $u \in W^1_\comp(M)$, choosing $\eta\le 1/\kappa$. 

\emph{Step 1.} Let $u \in W^{1}_{\comp}(M_{\varepsilon})$. By Proposition~\ref{prop:smoothness}, we identify $M_\varepsilon \simeq (0,\varepsilon)\times \mathcal Z$ in such a way that $\delta(t,q) = t$. By Proposition~\ref{prop:meas}, fixing a reference measure $d\mu$ on $\mathcal Z$, we have
$d\omega (t,q)= e^{2 \vartheta(t,q)} dt\, d\mu(q)$ on  $M_\varepsilon$,
for some smooth function $\vartheta : M_\varepsilon \to \R$.
Consider the unitary transformation $T :L^2(M_\varepsilon,d\omega)\to L^2(M_\varepsilon,dt\,d\mu )$ defined by $Tu = e^\vartheta u$. By Proposition \ref{prop:smoothness} $\partial_t$ is a unit horizontal vector field. Hence $|\nabla u|\geq |\partial_tu|$. Letting $v=T u$, an integration by parts yields 	
\begin{equation}
			\int_M |\nabla u|^2\;d\omega 
			\geq \int_{M_\varepsilon} |\partial_t u|^2 \,d\omega 
			=  \int_{M_\varepsilon} \bigg(|\partial_t v|^2 + \bigg(\underbrace{(\partial_t \vartheta)^2 + \partial^2_t \vartheta}_{= \Veff} \bigg) |v|^2\bigg) \, dt\,d\mu,
	\end{equation}
where the expression for $\Veff$ is in Proposition~\ref{prop:meas}. Recall the 1D Hardy inequality:
\begin{equation}\label{eq:Hardy1D}
\int_0^\varepsilon |f'(s)|^2\, ds  \geq \frac{1}{4}\int_0^\varepsilon \frac{|f(s)|^2}{s^2}\,ds, \qquad \forall f \in W^{1}_\comp((0,\varepsilon)).
\end{equation}
Since $u\in W^1_\comp(M_\varepsilon)$ and $\vartheta$ is smooth, for a.e.\ $q \in X_\varepsilon$, the function $t \mapsto v(t,q)$ is in $W^{1}_\comp((0,\varepsilon))$ (see \cite[Thm.\ 4.21]{EGmeas}). Then, by using \eqref{eq:assumptionprop}, Fubini's Theorem and \eqref{eq:Hardy1D}, we obtain \eqref{eq:weak-hardy} for functions $u \in W^{1}_{\comp}(M_{\varepsilon})$ with $\eta = \varepsilon$ and $c=0$.
	
\emph{Step 2.} 	Let $u \in W^1_\comp(M)$, and let $\chi_1,\chi_2$ be smooth functions on $[0,+\infty)$ such that
\begin{itemize}
\item $0 \leq \chi_i \leq 1$ for $i=1,2$;
\item $\chi_1 \equiv 1$ on $[0,\frac{\varepsilon}{2}]$ and $\chi_1 \equiv 0$ on $[\varepsilon,+\infty)$;
\item $\chi_2 \equiv 0$ on $[0,\frac{\varepsilon}{2}]$ and $\chi_2 \equiv 1$ on $[\varepsilon,+\infty)$;
\item $\chi_1^2+\chi_2^2 =1 $.
\end{itemize}
Consider the functions $\phi_i : M \to \R$ defined by $\phi_i:= \chi_i \circ \delta$. We have $\phi_1 \equiv 1$ on $M_{\varepsilon/2}$, $M_{\varepsilon/2} \subseteq \mathrm{supp}(\phi_1) \subseteq M_\varepsilon$, moreover $0\leq \phi_1 \leq 1$, and $\phi_1^2 + \phi_2^2 = 1$. Notice that $\phi_2 \equiv 1$ and $\phi_1 \equiv 0 $ on $M \setminus M_\varepsilon$, and so $\nabla \phi_i \equiv 0$ there. Moreover, since by Proposition \ref{prop:smoothness} i) there holds $|\nabla\delta|\leq 1$ a.e., we have
\begin{equation}
\label{eq:est}
c_1=  \sup_{M} \sum_{i=1}^2|\nabla \phi_i|^2 \leq \sup_{[0,\varepsilon]} \sum_{i=1}^2 |\chi_i'|^2< +\infty.
\end{equation}
By \eqref{eq:keyidentity} of Lemma~\ref{l:trick}, we obtain 
\begin{align}
\mathcal{E}(u) = \sum_{i=1}^2 \mathcal{E}(\phi_i u) -\sum_{i=1}^2  \int_M |\nabla \phi_i|^2 |u|^2 d\omega 
 \geq \mathcal{E}(\phi_1 u) -  c_1 \|u\|^2,
\end{align}
where we used that $\mathcal{E}(\phi_2 u) \geq 0$, that $\phi_1^2+\phi_1^2=1$ and the inequality \eqref{eq:est}. 
In particular, applying the statement proved in Step 1 to $\phi_1 u \in W^1_\comp(M_\varepsilon)$, we get
\begin{align}
\mathcal{E}(u)  &\geq \int_{M_\varepsilon}\left(\frac{1}{\delta^2}-\frac{\kappa}{\delta}\right)|\phi_1 u|^2 d\omega - c_1\|u\|^2. 
\end{align}
Letting $\eta = \min\{\frac{\varepsilon}{2},1/\kappa\}$, we have
\begin{align}
	\mathcal E(u)& \geq \int_{M_{\eta}} \left(\frac{1}{\delta^2}-\frac{\kappa}{\delta}\right)|u|^2 d\omega- \int_{M_{\varepsilon}\setminus M_{\eta}} \left\lvert\frac{1}{\delta^2}-\frac{\kappa}{\delta}\right\rvert |\phi_1 u|^2 d\omega -c_1\|u\|^2 \\
& \geq \int_{M_{\eta}} \left(\frac{1}{\delta^2}-\frac{\kappa}{\delta}\right)|u|^2 d\omega - \left(c_1 + \sup_{\eta \leq \delta \leq \varepsilon}\left\lvert \frac{1}{\delta^2} - \frac{\kappa}{\delta}\right\rvert \right) \|u\|^2,
\end{align}
which concludes the proof.
\end{proof}

\begin{prop}[Agmon-type estimate]\label{p:Agmon}
Let $N$ be a complete sub-Rie\-mann\-ian manifold endowed with a measure $\omega$. Assume $\omega$ to be smooth on 
$M=N\setminus \mathcal{Z}$, where the singular set $\mathcal Z$ is a smooth embedded  hypersurface with no characteristic points.
Assume also that there exist $\kappa \geq 0$, $\eta \leq 1/\kappa$ and $c \in \R$ such that,
	\begin{equation}\label{eq:agmon}
\int_M|\nabla u|^2\;d\omega \geq  \int_{M_\eta} \left(\frac {1} {\delta^2} - \frac \kappa \delta\right)|u|^2 d\omega + c \|u\|^2 , \qquad \forall u \in W^{1}_\comp(M).
	\end{equation}
Then, for all $E < c$, the only solution of $\op^* \psi = E \psi$ is $\psi \equiv 0$.
\end{prop}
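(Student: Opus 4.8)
The plan is to combine the identity \eqref{eq:keyidentity2} with the weak Hardy inequality \eqref{eq:agmon}, tested against a one‑parameter family of cut‑off functions that converge to $1$ while "spending" exactly one quarter of the singular weight $\delta^{-2}$; the remaining surplus $\tfrac34\delta^{-2}$, together with the spectral gap $c-E>0$, will then force $\psi\equiv0$.

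Concretely, I would fix $\psi\in\dom(H^*)$ with $H^*\psi=E\psi$ and $E<c$. By Lemma~\ref{l:shubin}, $\psi\in W^1_\loc(M)$, so that for any real Lipschitz $f$ with $f\psi\in W^1_\comp(M)$ the identity \eqref{eq:keyidentity2} holds; combining it with \eqref{eq:agmon} applied to $f\psi$ gives the master inequality
\[
\langle\psi,|\nabla f|^2\psi\rangle\ \ge\ \int_{M_\eta}\Big(\frac{1}{\delta^2}-\frac{\kappa}{\delta}\Big)|f\psi|^2\,d\omega\ +\ (c-E)\,\|f\psi\|^2 .
\]
I would then take $f=g_a\,\rho_R$, where $\rho_R$ is a Lipschitz cut‑off equal to $1$ on the ball $\{d(\cdot,p_0)\le R\}$, supported in $\{d(\cdot,p_0)\le2R\}$, with $|\nabla\rho_R|\le1/R$ a.e.\ (such functions exist since $N$ is complete, and $\int_M|\nabla\rho_R|^2|\psi|^2\le R^{-2}\|\psi\|^2\to0$), while $g_a:=\min\{1,\sqrt{\delta/a}\}$ for a small $a>0$. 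After shrinking $\eta$ so that $\eta\le\min\{\varepsilon,1/\kappa\}$ (which only strengthens \eqref{eq:agmon}, its integrand being nonnegative on $M_\eta$) and taking $a<\min\{\eta,\tfrac{3}{4\kappa}\}$ (the last condition vacuous when $\kappa=0$), Proposition~\ref{prop:smoothness}(iii) yields $|\nabla\delta|\equiv1$ on $M_\varepsilon\supseteq\{0<\delta<a\}$, hence $|\nabla g_a|^2=\tfrac{1}{4\delta^2}g_a^2$ on $\{0<\delta<a\}$ and $\nabla g_a\equiv0$ on $\{\delta\ge a\}$. Letting $R\to\infty$ in the master inequality (monotone convergence in the Hardy integral, and $\rho_R\equiv1$ on $M_\eta$ for $R$ large) and using $g_a\equiv1$ on $\{\delta\ge a\}$, it rearranges to
\[
0\ \ge\ \int_{\{\delta<a\}}\Big(\frac{3}{4\delta^2}-\frac{\kappa}{\delta}\Big)|g_a\psi|^2\,d\omega\ +\ \int_{\{a\le\delta<\eta\}}\Big(\frac{1}{\delta^2}-\frac{\kappa}{\delta}\Big)|\psi|^2\,d\omega\ +\ (c-E)\,\|g_a\psi\|^2 .
\]
Since $\tfrac{3}{4\delta^2}\ge\tfrac{\kappa}{\delta}$ on $\{\delta<a\}$ and $\tfrac{1}{\delta^2}\ge\tfrac{\kappa}{\delta}$ on $\{a\le\delta<\eta\}$ (because $\eta\le1/\kappa$), all three summands on the right are nonnegative, so $(c-E)\|g_a\psi\|^2=0$; as $c-E>0$ and $g_a>0$ everywhere on $M$, this gives $\psi\equiv0$.

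The point that requires care — and which I expect to be the main obstacle — is that $g_a$ is not Lipschitz at $\mathcal{Z}$ and $g_a\psi$ need not be compactly supported in $M$, so the master inequality cannot be applied to $f=g_a\rho_R$ verbatim. I would therefore replace $g_a$ by a Lipschitz family $g_{a,\epsilon}$ that vanishes on $\{\delta\le\epsilon\}$, coincides with a renormalisation of $\sqrt{\delta/a}$ on $\{\epsilon\le\delta\le a\}$, and equals $1$ on $\{\delta\ge a\}$; then $f=g_{a,\epsilon}\rho_R$ is admissible, and as $\epsilon\downarrow0$ one recovers the displayed inequality up to an error term concentrated near $\{\delta=\epsilon\}$, coming from the region where the pointwise identity $|\nabla g|^2=\tfrac{1}{4\delta^2}g^2$ is violated. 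The delicate step is to show this error tends to $0$ along a suitable sequence $\epsilon_k\downarrow0$; this should follow either from a judicious choice of interpolation profile near $\{\delta=\epsilon\}$ or by first extracting a crude decay estimate for $\psi$ near $\mathcal{Z}$ from the master inequality itself tested against a linear cut‑off. Once the error is controlled, the remaining ingredients — monotone/dominated convergence for $\|g_{a,\epsilon}\psi\|^2$ and for the Hardy integrals, together with the completeness argument for $\rho_R$ — are routine, and the conclusion $\psi\equiv0$ follows as above.
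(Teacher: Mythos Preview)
Your overall architecture is exactly the paper's: combine \eqref{eq:keyidentity2} with \eqref{eq:agmon} to get the master inequality, test it against functions of the form $f=F(\delta)\cdot(\text{completeness cut-off})$, and let the cut-off exhaust $M$. The gap is in the choice of radial profile.

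Your profile $g_a(\tau)=\sqrt{\tau/a}$ satisfies $(g_a')^2=\tfrac{1}{4\tau^2}g_a^2$, i.e.\ it ``spends'' only a quarter of the weight, and it is \emph{not} Lipschitz at $0$: $g_a(2\epsilon)\sim\sqrt{\epsilon}$, so any Lipschitz truncation $g_{a,\epsilon}$ vanishing on $\{\delta\le\epsilon\}$ must have slope $\sim\epsilon^{-1/2}$ on the transition layer, giving an error of order
\[
\frac{1}{\epsilon}\int_{\{\epsilon<\delta<2\epsilon\}}|\psi|^2\,d\omega.
\]
This quantity need not tend to $0$, even along a subsequence: if $\mu(t):=\int_{\{\delta<t\}}|\psi|^2$ behaves like $t$ near $0$ (nothing in the hypotheses rules this out), then $(\mu(2\epsilon)-\mu(\epsilon))/\epsilon\equiv 1$. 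So neither a ``judicious interpolation'' (the average slope is forced by the endpoint values) nor a clever choice of $\epsilon_k$ rescues the argument. Your second proposed fix, a preliminary decay estimate obtained from the master inequality with a \emph{linear} cut-off, is precisely the paper's proof and already yields $\psi\equiv 0$ outright --- so the square-root profile becomes superfluous.

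The paper's remedy is to spend the \emph{full} weight: take $F$ solving $F'(\tau)=\sqrt{\tfrac{1}{\tau^2}-\tfrac{\kappa}{\tau}}\,F(\tau)$ on $[2\zeta,\eta]$ with $F(\eta)=1$ (for $\kappa=0$ this is $F(\tau)=\tau/\eta$), extended linearly to $0$ on $[\zeta,2\zeta]$. Then $F(2\zeta)=O(\zeta)$, so the truncated slope is $O(1)$ and the error is $O(1)\int_{\{\zeta<\delta<2\zeta\}}|\psi|^2\to 0$ merely because $\psi\in L^2$. The Remark following the proof in the paper makes exactly this point: the profile $\tau^{\sqrt a}$ arising when one spends only a fraction $a<1$ of the weight has unbounded regularized derivative; your $\sqrt\tau$ corresponds to $a=1/4$.

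In short: keep your master inequality and the $\rho_R$ limit, but replace $g_a=\sqrt{\delta/a}$ by the linear-in-$\delta$ profile (or its ODE analogue when $\kappa>0$); then the $\epsilon$-regularization is harmless and the proof closes immediately.
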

Notice that the requirement $\eta \leq 1/\kappa$ ensures the non-negativity of the integrand in \eqref{eq:agmon}. The proof follows the ideas of \cite{Nenciu2008,DeVerdiere2009a}.

\begin{proof}
Let $f: M \to \R$ be a bounded Lipschitz function w.r.t.\ the sub-Riemannian distance with $\supp f \subseteq \overline{M \setminus M_\zeta}$, for some $\zeta >0$, and $\psi$ be a solution of $(H^* - E) \psi = 0$ for some $E <c $. We start by claiming that
\begin{equation}\label{eq:agmondfirststep}
(c-E) \|f \psi\|^2 \leq \langle \psi, |\nabla f |^2 \psi \rangle - \int_{M_\eta} \left(\frac{1}{\delta^2} - \frac{\kappa}{\delta}\right) |f \psi|^2 d\omega.
\end{equation}
If $f$ had compact support, then $f \psi \in W^1_\comp(M)$, and hence \eqref{eq:agmondfirststep} would follow directly from \eqref{eq:agmon} and \eqref{eq:keyidentity2}. To prove the general case, let $\theta : \R \to \R$ be the function defined by
\begin{equation}
\theta(s) = \begin{cases}
1 & s \leq 0, \\
1-s & 0\leq s \leq 1, \\
0 & s \geq 1.
\end{cases}
\end{equation}
Fix $q \in M$ and let $G_n : M \to \R$ defined by $G_n(p) = \theta(d_g(q,p) - n )$. Notice that $G_n$ is Lipschitz w.r.t.\ the sub-Riemannian distance, and hence its sub-Riemannian gradient satisfies $|\nabla G_n | \leq 1$, see \cite[Thm.\ 8]{FHK99}, \cite[Prop.\ 2.9]{FSSC97}, \cite[Thm.\ 1.3]{GN98}. Moreover $\supp(G_n) \subseteq \bar{B}_q(n+1)$. Observe that
\begin{equation}\label{eq:compactnesssupport}
\supp G_n f \subseteq \overline{(M\setminus M_\zeta) \cap B_q(n+1)}.
\end{equation}
Even if $(M,d)$ is a non-complete metric space (and hence, its closed balls might fail to be compact), the set on the right hand side of \eqref{eq:compactnesssupport} is compact, being uniformly separated from the metric boundary. This can be proved with the same argument of \cite[Prop.\ 2.5.22]{BBI} 
and exploiting the completeness of $(N,d)$. Hence, the support of $f_n:= G_n f$ is compact, and \eqref{eq:agmondfirststep} holds with $f_n$ in place of $f$.
The claim now follows by dominated convergence. Indeed, $f_n \to f$ point-wise as $n \to +\infty$ and $f_n \leq f$. Hence $\|f_n \psi \| \to \|f \psi \|$. Thus, since $\supp f_n \subseteq \overline{M \setminus M_\zeta}$, we have
\begin{equation}
\lim_{n \to +\infty} \int_{M_\eta} \left(\frac{1}{\delta^2} - \frac{\kappa}{\delta}\right) |f_n \psi|^2\, d\omega= \int_{M_\eta} \left(\frac{1}{\delta^2} - \frac{\kappa}{\delta}\right) |f \psi|^2\, d\omega.
\end{equation}
Finally, since $|\nabla f_n | \leq C$, and $\nabla f_n \to \nabla f$ a.e.\ we have $\langle \psi, |\nabla f_n |^2 \psi \rangle \to \langle \psi, |\nabla f |^2 \psi \rangle$, yielding the claim.

We now plug a particular choice of $f$ into \eqref{eq:agmondfirststep}. Set
\begin{equation}\label{eq:functin}
f(p) := \begin{cases}
F(\delta(p)) &  0 < \delta(p) \leq \eta, \\
1 & \delta(p) > \eta ,
%1 & \varepsilon < \delta(p) \leq R , \\
%R + 1 - D(p) & R < \delta(p) \leq R+1 ,\\
%0 & D(p) > R+1,
\end{cases}
\end{equation}
where $F$ is a Lipschitz function to be chosen later. Recall that $|\nabla\delta| \leq 1$ a.e. on $M$. In particular, a.e.\ on $M_\eta$, we have $|\nabla f| = |F'(\delta)| |\nabla\delta | \leq |F'( \delta)|$. Thus, by \eqref{eq:agmondfirststep}, we have
\begin{equation}\label{eq:agmondsecondstep}
(c-E) \|f \psi\|^2 \leq  \int_{M_\eta} \left[F'(\delta)^2 -\left(\frac{1}{\delta^2} - \frac{\kappa}{\delta}\right) F(\delta)^2 \right]|\psi|^2 d\omega .%+ \int_{R <D<R+1} |\psi|^2 d\omega.
\end{equation}
Let now $0<2\zeta <\eta$. We choose $F$ for $\tau \in [2\zeta,\eta]$ to be the solution of
\begin{equation}\label{eq:costruzioneFAgmon}
F'(\tau) = \sqrt{\frac{1}{\tau^2}- \frac{\kappa}{\tau}} F(\tau), \qquad \text{with } F(\eta) = 1,
\end{equation}
to be zero on $[0,\zeta]$, and linear on $[\zeta,2\zeta]$. %see Fig.~\ref{f:function}. 
Observe that the assumption $\eta \leq 1/\kappa$ implies that \eqref{eq:costruzioneFAgmon} is well defined. 
We first consider the case $\kappa=0$. The function $F$, together with its derivative reads
\begin{equation}
F(\tau)=\begin{cases}
0&\tau\in[0,\zeta],\\
\frac{2}{\eta}(\tau-\zeta)&\tau\in[\zeta,2\zeta],\\
\frac{1}{\eta}\tau&\tau\in[2\zeta,\eta),
\end{cases}
\qquad
F'(\tau)=\begin{cases}
0&\tau\in[0,\zeta],\\
\frac{2}{\eta}&\tau\in[\zeta,2\zeta],\\
\frac{1}{\eta}&\tau\in[2\zeta,\eta).
\end{cases}
\end{equation}
The global function defined by \eqref{eq:functin} is a Lipschitz function with support contained in $\overline{M\setminus M_\zeta}$ and such that $F' \leq K$ on $[\zeta,2\zeta]$, for some constant independent of $\zeta$ ($K=2/\eta$). Therefore, from \eqref{eq:agmondsecondstep} we get
\begin{equation}\label{eq:laststepk0}
(c-E)\|f\psi\|^2
\leq \int_{M_{2\zeta}\setminus M_{\zeta}} \left[F'(\delta)^2 -\frac{1}{\delta^2}  F(\delta)^2 \right]|\psi|^2 d\omega
\leq K^2\int_{M_{2\zeta}\setminus M_\zeta}|\psi|^2d\omega.
\end{equation}
If we let $\zeta \to 0$, then $f$ tends to an almost everywhere strictly positive function. Recalling that $E<c$, and taking the limit, equation \eqref{eq:laststepk0} implies $ \psi \equiv 0$.
When $\kappa>0$  the solution to \eqref{eq:costruzioneFAgmon}, on the interval $[2\zeta,\eta]$, is
\begin{equation}
F(\tau)=C(\kappa,\eta) \frac{1-\sqrt{1-\kappa  \tau}}{1+\sqrt{1-\kappa  \tau}}e^{2 \sqrt{1-\kappa  \tau}}, \qquad \tau \in[2\zeta,\eta],
\end{equation}
for a constant $C(\kappa,\eta)$ such that $F(\eta) = 1$. By construction of $F$ on $[\zeta,2\zeta]$, we obtain
\begin{equation}
F'(\tau) = \frac{F(2\zeta)}{\zeta}, \qquad \tau \in [\zeta,2\zeta].
\end{equation}
Hence we have $F(2\zeta) = C(\kappa,\eta) e^2 \kappa \zeta/2 + o(\zeta)$, which yields the boundedness of $F'$ on $[\zeta,2\zeta]$ by a constant not depending on $\zeta$. Moreover the global function defined by \eqref{eq:functin} is Lipschitz with support contained in $\overline{M\setminus M_\zeta}$. Thus, by \eqref{eq:agmondsecondstep}, we conclude that $\|\psi\|=0$ as in the case $\kappa=0$.
%\begin{figure}
%%\missingfigure{}
%\includegraphics[width=.8\textwidth]{fig1}
%\caption{Plot of the function $F(\tau)$. Compare with \cite[Fig. 4.1]{DeVerdiere2009a}.}\label{f:function}
%\end{figure}
%\begin{equation}\label{eq:laststep}
%(c-E)\|f \psi \|^2 \leq K^2 \int_{ \zeta \leq \delta \leq 2\zeta} |\psi|^2 d \omega .
%\end{equation}
\end{proof}

\begin{rmk}[The role of the Hardy constant in the proof] If \eqref{eq:agmon} is replaced with 
\begin{equation}
\int_M|\nabla u|^2\;d\omega \geq  a\int_{M_\eta} \left(\frac {1} {\delta^2} - \frac \kappa \delta\right)|u|^2 d\omega + c \|u\|^2 , \qquad \forall u \in W^{1}_\comp(M)
\end{equation}
for $\frac{3}{4}<a<1$, then the arguments in the previous proof cannot be applied.
To see this, let us consider the case $\kappa=0$. The function $F$ satisfying a suitably modified version of \eqref{eq:costruzioneFAgmon} reads in this case 
\begin{equation}
F(\tau)=\left(\frac{\tau}{\eta}\right)^{\sqrt{a}},\qquad \tau\in [2\zeta,\eta].
\end{equation} 
Then, by construction, the function $F$ satisfies 
\begin{equation}
F(\tau)=\left(\frac{2}{\eta}\right)^{\sqrt{a}}\zeta^{\sqrt{a}-1}(\tau-\zeta)\quad \text{for } [\zeta,2\zeta].
\end{equation}
In particular, if $a<1$, we cannot find a constant $K$ independent of $\zeta$ such that $F'(\tau)=({2}/{\eta})^{\sqrt{a}}\zeta^{\sqrt{a}-1}\leq K$.
On the other hand,  for $a\geq 1$, we have $F'(\tau)=({2}/{\eta})^{\sqrt{a}}\zeta^{\sqrt{a}-1}\leq 2^{\sqrt{a}}/\eta^{2-\sqrt{a}}=K(\eta)$ and the previous argument works exactly in the same way.
\end{rmk}

\subsection{Proof of the criterion}

%\begin{rmk}
%It is well-known that the $3/4$ factor in \eqref{eq:hypo-1} is optimal and cannot be replaced with a smaller constant. (See, e.g., \cite[Thm.\ X.10]{MR0493420} for the one-dimensional case.) However, as proven in \cite{Nenciu2008} in the case of bounded domains in $\mathbb R^n$, the whole right hand side of \eqref{eq:hypo-1} can be replaced by functional expressions of $\delta$ that satisfies some precise conditions. For clarity, and since it is sufficient for the forthcoming applications, we limit ourselves to an expression of the form \eqref{eq:hypo-1}. Notwithstanding, we see no obstacles in applying the refined techniques of \cite{Nenciu2008} in our geometrical setting to obtain sharper functional conditions.
%\end{rmk}

%Henceforth we let $\op = -\Delta_\omega+\pot$. 

\begin{proof}[Proof of Theorem~\ref{thm:main}]
We divide the proof of the theorem in two steps.

\vspace{.2cm}
\textit{Part 1: essential self-adjointness.}
By Proposition~\ref{prop:weak-hardy}, the operator $\op$ is semibounded. Thus, by a well-known criterion (see \cite[Thm.\ X.I and Corollary]{MR0493420}), $\op$ is essentially self-adjoint if and only if there exists $E <0$ such that the only solution of $\op^*\psi = E\psi$ is $\psi \equiv 0$. This is guaranteed by the Agmon-type estimate of Proposition \ref{p:Agmon}, whose hypotheses are satisfied again by Proposition \ref{prop:weak-hardy}.

\vspace{.2cm}
\textit{Part 2: compactness of the resolvent.} The proof follows the same steps as in \cite[Prop.\ 3.7]{quantum-confinement}, but makes use of the sub-Riemannian version of the Rellich-Kondrachov theorem (Lemma \ref{l:RK}). For the sake of completeness, we sketch here the proof.

First of all notice that it suffices to show that there exists $z\in\R$ such that the resolvent $(H^*-z)^{-1}$ is compact on $L^2(M)$. This follows by the first resolvent formula (see \cite[Thm.\ VIII.2]{MR0493419}) and by the fact that compact operators are an ideal of the algebra of bounded ones. Moreover, by Proposition \ref{prop:weak-hardy}, $H$ is a semibounded operator, i.e.,
\begin{equation}\label{eq:sbdd}
\langle Hu, u\rangle\geq c\|u\|^2, \qquad \forall u\in C^\infty_c(M).
\end{equation} 
Hence, by \cite[Thm.\ XIII.64]{MR0493421} its spectrum consists of discrete eigenvalues with finite multiplicity.

Notice that \eqref{eq:sbdd}, together with the fact that $H^*$ is self-adjoint, imply that $(H^*- z)^{-1}$ is well defined for every $z\leq c$ and 
$\|(H^*-z)^{-1}\|\leq 1/(c-z)$.
To prove compactness of the operator $(H^*-z)^{-1}:L^2(M)\to \dom(H^*)$ for $z<c$ we need to show that for any bounded sequence $\psi_n\in L^2(M)$, say $\|\psi_n\|\leq (c-z)$, the image sequence $u_n=(H^*-z)^{-1}\psi_n\in \dom(H^*)$ has a subsequence converging in $L^2(M)$. Notice that $\|u_n\|\leq 1$.

In order to extract a converging subsequence of $u_n$, we prove estimates for the functions $u_n$ localized close and far away from the singular region. We provide such estimates for any function $u\in \dom(H^*)\subseteq W^1_{\mathrm{loc}}(M)$ (see Lemma \ref{l:shubin}), setting $\psi=(H^*-z)u$, and we will then apply them to the elements of the sequence $u_n$, to extract a converging subsequence. To this purpose let $\chi_1,\chi_2:[0,+\infty]\to \mathbb R$ be real valued Lipschitz functions such that
\begin{itemize}
\item $0 \leq \chi_i \leq 1$ for $i=1,2$;
\item $\chi_1 \equiv 1$ on $[0,\eta/2]$ and $\chi_1 \equiv 0$ on $[\eta,+\infty)$;
\item $\chi_2 \equiv 0$ on $[0,\eta/2]$ and $\chi_2 \equiv 1$ on $[\eta,+\infty)$;
\item they interpolate linearly elsewhere.
\end{itemize}
Consider the functions $\phi_i:= \chi_i \circ \delta$, which are Lipschitz w.r.t.\ the sub-Riemannian distance. Notice that $\phi_1 + \phi_2 = 1$. Since $M$ is relatively compact in $N$, $\phi_2$ is compactly supported in $M$, implying by \eqref{eq:keyidentity}
\begin{align}
\mathcal{E}(\phi_2 u,\phi_2u) & = \re \mathcal{E}(u,\phi_2^2 u ) + \langle u , |\nabla \phi_2|^2 u \rangle=\re \langle H^* u,\phi_2^2u\rangle +\langle u , |\nabla \phi_2|^2 u \rangle\\
& = z \|\phi_2 u\|^2 + \re \langle \psi, \phi^2_2 u\rangle + \langle u,|\nabla \phi_2|^2 u \rangle  \leq z \|u\|^2 + \|\psi\| \|u\| + 4\|u\|^2 \eta^{-2}, \label{eq:boundenergia}
\end{align}
where in the last estimate we used the fact that $\chi_2$ is linear  between $\eta/2$ and $\eta$ hence $\phi_2=\chi_2\circ\delta$ satisfies $|\nabla \phi_2| \leq |\chi_2^\prime| |\nabla \delta| \leq 2/\eta$. We deduce the following estimate ``far away'' from the singular region:
\begin{equation}
\label{eq:est_far}
\int_{M\setminus M_{\eta/2}}|\nabla (\phi_2u)|^2\;d\omega=\mathcal E(\phi_2u,\phi_2u) \leq z \|u\|^2 + \|\psi\| \|u\| + 4\|u\|^2 \eta^{-2}.
\end{equation}
We now consider the localization of $u$ close to the metric boundary. Since $H$ is essentially self-adjoint and $H^*=\bar H$, we can choose a sequence 
$u_k\in C_c^\infty(M)$ such that $u_k$ converges to $u$ in the graph norm of $H^*$, i.e., $\|H^*(u_k-u)\|+\|u_k-u\|\to0$ as $k\to \infty$. We deduce an upper bound for $\phi_1u$ in $M_\eta$ from the following bounds on the elements $u_k$ as follows. First, we use \eqref{eq:weak-hardy} to obtain
\begin{align}
\int_{M_\eta} |u_k|^2 \, d\omega  & = \int_{M_{\eta}} \frac{\delta^2}{1-\delta\kappa} \frac{1-\delta\kappa}{\delta^2}|u_k|^2 \, d\omega \leq  \frac{\eta^2}{1-\eta \kappa} \int_{M_\eta}\left(\frac{1}{\delta^2} - \frac{\kappa}{\delta}\right) |u_k|^2 \, d \omega \\
& \leq  \frac{\eta^2}{1-\eta \kappa} \left( \mathcal{E}(u_k,u_k) - c \|u_k\|^2\right) = \frac{\eta^2}{1-\eta \kappa} \left( \langle H^* u_k,u_k\rangle - c \|u_k\|^2\right).
\end{align}
Then, passing to the limit $k\to \infty$, and recalling that $\phi_1\leq 1$, we get
\begin{align}
\label{eq:vicinoalbordo}
\int_{M_\eta}|\phi_1u|^2\,d\omega&\leq \int_{M_\eta}|u|^2\,d\omega\\
&\leq \frac{\eta^2}{1-\eta \kappa} \left( \langle H^* u,u\rangle - c \|u\|^2\right)=\frac{\eta^2}{1-\eta \kappa}\left((z-c)\|u\|^2+\|\psi\|\|u\|\right).
\end{align}

We apply the latter construction to each element $u_n=(H^*-z)^{-1}\psi_n\in \dom(H^*)$, setting $u_n=u_{n,1}+u_{n,2}$ with $u_{n,i}=\phi_{n,i}u_n$. Recalling that $\|u_n\|\leq 1$, equation \eqref{eq:est_far} applied to $u=u_n$, $\psi=\psi_n$, implies
\begin{equation}
\|u_{n,2}\|^2_{W^1(M)} = \int_{M \setminus M_{\eta/2}} | \nabla u_{n,2}|^2\, d\omega + \|u_{n,2}\|^2  \leq c + 4\eta^{-2} + 1.
\end{equation}
That is, $u_{n,2}$ is bounded in $W^1(M)$. Moreover, by construction, $u_{n,2} \in W^1_\comp(\Omega)$ where $\Omega =\{\delta\geq\eta/2\}\subset M$ is a compact domain with smooth boundary by Proposition \ref{prop:smoothness}. This implies that $u_{n,2}$ converges up to subsequences in $L^2(\Omega)$ (thus in $L^2(M)$) by the sub-Riemannian Rellich-Kondrachov theorem, see Lemma \ref{l:RK}.

On the other hand, \eqref{eq:vicinoalbordo} %and the bound $z<c$ 
implies that for some constant $C$ independent of $\eta$, we have
\begin{equation}
\|u_{n,1}\|^2 = \int_{M_{\eta}} |u_{n,1}|^2\, d \omega \leq \frac{\eta^2}{1-\eta \kappa} 2(c-z) \leq C \eta^2,
\end{equation}

Since $\eta$ in the Hardy inequality \eqref{eq:weak-hardy} can be arbitrarily small, say $\tilde{\eta}_k^2 = 1/k$, we actually proved that for all $k \in \N$, there is a subsequence $n \mapsto \gamma_k(n)$ such that $u_{\gamma_k(n)} = \sum_{i=1}^2 u_{\gamma_k(n),i}$ with $\|u_{\gamma_k(n),1}\| \leq C/k$ and $u_{\gamma_k(n),2}$ convergent in $L^2(M)$. Exploiting this fact, we extract a Cauchy subsequence of $u_n$, yielding the compactness of $(H^*-z)^{-1}$, and concluding the proof. 
Details on the extraction are in \cite[Prop.\ 3.7]{quantum-confinement}.
\end{proof}

\section{Applications to the intrinsic sub-Laplacian}
\label{sec:examples}
The main interest of our result is in its application to the study of sub-Riemannian manifolds endowed with the intrinsic Popp's measure. More precisely, given a complete sub-Riemannian manifold $N$, we are interested in studying essential self-adjointness of the sub-Laplacian $\Delta=\Delta_\mathcal{P}$, where $\mathcal P$ is the Popp's measure. As discussed in Section \ref{ss:popp}, $\mathcal{P}$ is smooth on the equiregular region of $N$ (the largest open set on which the sub-Riemannian structure is equiregular), and blows up on its complement: the singular region $\mathcal{Z}$. We assume that $\mathcal{Z}$ is a smooth embedded hypersurface with no characteristic points, and that $\mathcal{Z}$ is compact (or, at least, that it has strictly positive injectivity radius, see Remark~\ref{rmk:cpt}). Furthermore, $\dom(\Delta)=C_c^\infty(M)$ with $M=N\setminus \mathcal Z$ or any of its connected components.

We start by considering a  family of structures generalizing the Martinet structure,  which has been presented in the introduction. These are complete sub-Riemannian structures on $\mathbb R^3$, equiregular outside a hypersurface $\mathcal Z\subset\R^3$, on which the distance from $\mathcal Z$ is explicit. Using Theorem \ref{thm:main} (and Remark \ref{rmk:cpt}) we deduce essential self-adjointness of $\Delta=\Delta_\mathcal{P}$ defined on $C^\infty_c(\mathbb R^3\setminus \mathcal Z)$.

\begin{example}[$k$-Martinet distribution]\label{ex:martinet}
Let $k\in\mathbb N$. We consider the sub-Riemannian structure on $\R^3$ defined by the following global generating family of vector fields:
\begin{equation}
X_1=\partial_x,\qquad X_2=\partial_y+x^{2k}\partial_z.
\end{equation}
The singular region is $\mathcal Z=\{x=0\}$ and the distance from $\mathcal Z$ is $\delta(x,y,z)=|x|$.
Using formula \eqref{eq:popp}, the associated Popp's measure turns out to be
\begin{equation}
\label{eq:pm}
\mathcal P=\frac{1}{2\sqrt{2}k|x|^{2k-1}}\;dx\wedge dy \wedge dz.
\end{equation}
The case $k=1$ is the standard Martinet structure considered in the introduction. Notice that the injectivity radius from $\mathcal{Z}$ is infinite, hence even if $\mathcal{Z}$ is not compact we can apply Theorem~\ref{thm:main}. We compute the effective potential $V_\mathrm{eff}$ using \eqref{eq:Veff}. Indeed we have
\begin{equation}
\theta=\theta(x)=\frac{1}{2}\log\frac{1}{2\sqrt{2}k x^{2k-1}},
\end{equation}
and thus, using \eqref{eq:Veff}, we have
\begin{equation}
V_{\mathrm{eff}}(x)=\frac{4k^2-1}{4x^2}\geq \frac{3}{4x^2}, \qquad \forall k \geq 1.
\end{equation}
Hence \eqref{eq:cond} is satisfied, and $\Delta_{\mathcal{P}}$ with domain $C^\infty_c(\R^3 \setminus \mathcal{Z})$ is essentially self-adjoint.
\end{example}

The study of condition \eqref{eq:cond} is a difficult task, because it requires the explicit knowledge of the distance from the singular set. 
In the following we define a class of sub-Riemannian structures, to which Theorem \ref{thm:main} applies, without knowing an explicit expression for $\delta$.
Let $\varpi$ be a reference measure, smooth and positive on the whole $N$ and let $\mathcal P$ denote Popp's measure, smooth on $M=N\setminus Z$. We define the function $\rho:N\to\R$ by setting
\begin{equation}\label{eq:rho}
\rho(p)=\begin{cases}
\left(\frac{d\mathcal P}{d\varpi}\right)^{-1}(p)& \text{if }p\in{N\setminus \mathcal Z}, \\
0&\text{if }p\in\mathcal Z.
\end{cases}
\end{equation}
This is the unique continuous extension to $\mathcal Z$ of the reciprocal of the Radon-Nikodym derivative of $\mathcal P$ with respect to $\varpi$. Notice that $\rho$ is smooth on $N\setminus\mathcal{Z}$.

\begin{definition}\label{def:popp_reg}
We say that a sub-Riemannian manifold $N$ is {\em Popp-regular} if it is equiregular outside a smooth embedded hypersurface $\mathcal{Z}$ containing no characteristic points, and 
there exists $k\in\N$ such that, for all $q\in\mathcal Z$ there exists a neighborhood $\mathcal O$ of $q$ and a smooth submersion $\psi:\mathcal O\to \R$ such that the function $\rho$ defined in \eqref{eq:rho} satisfies $\rho|_\mathcal O=\psi^k$.
\end{definition}

Definition \ref{def:popp_reg} generalizes the notion of {\em regular} almost Riemannian structure given in \cite[Def.\ 7.10]{quantum-confinement}.
Notice that the sub-Riemannian structure in Example \ref{ex:martinet} is Popp-regular.

\begin{prop}
Let $N$ be a complete and Popp-regular sub-Riemannian manifold, with compact singular set $\mathcal Z$. Then, the sub-Laplacian $\Delta_\mathcal P$ with domain $C_c^\infty(M)$ is essentially self-adjoint in $L^2(M)$, where $M=N\setminus \mathcal Z$ or one of its connected components. Moreover, if $M$ is relatively compact, the unique self-adjoint extension of $\Delta_\mathcal P$ has compact resolvent.
\end{prop}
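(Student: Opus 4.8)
The plan is to reduce the statement to Theorem~\ref{thm:main} by verifying the effective potential condition \eqref{eq:cond} for the Popp measure $\omega = \mathcal{P}$ on a Popp-regular manifold. All the hypotheses of Theorem~\ref{thm:main} except \eqref{eq:cond} are immediate from the definition of Popp-regularity: $\mathcal{Z}$ is a smooth embedded compact hypersurface with no characteristic points, and $\mathcal{P}$ is smooth and positive on $M = N \setminus \mathcal{Z}$. So the entire content of the proof is to show that, locally near $\mathcal{Z}$, the quantity $\Veff$ associated to $\mathcal{P}$ satisfies $\Veff \geq \tfrac{3}{4\delta^2} - \tfrac{\kappa}{\delta}$ for $\delta$ small, with some $\kappa \geq 0$.

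First I would fix $q \in \mathcal{Z}$, take the neighborhood $\mathcal{O}$ and the submersion $\psi : \mathcal{O} \to \R$ with $\rho|_{\mathcal{O}} = \psi^k$ provided by Definition~\ref{def:popp_reg}. Writing $\varpi$ for the fixed globally smooth reference measure, on $\mathcal{O} \setminus \mathcal{Z}$ we have $d\mathcal{P} = \rho^{-1}\, d\varpi = \psi^{-k}\, d\varpi$. Now pass to the tubular coordinates $(t,q') \in (0,\varepsilon) \times X_\varepsilon$ of Proposition~\ref{prop:smoothness}, in which $\delta(t,q') = t$ and $\nabla\delta = \partial_t$. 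By Proposition~\ref{prop:meas} we may write $d\mathcal{P} = e^{2\theta(t,q')}\, dt\, d\mu(q')$ and $d\varpi = e^{2\theta_\varpi(t,q')}\, dt\, d\mu(q')$ with $\theta_\varpi$ smooth \emph{up to $t = 0$} (since $\varpi$ is globally smooth and the coordinate change $F$ is a smooth diffeomorphism of a neighborhood of $\mathcal{Z}$). Hence $e^{2\theta} = \psi^{-k} e^{2\theta_\varpi}$, i.e.
\begin{equation}
\theta(t,q') = -\frac{k}{2}\log\psi(t,q') + \theta_\varpi(t,q').
\end{equation}
The key structural point is that $\mathcal{Z} = \{\psi = 0\} = \{t = 0\}$ locally and $\psi$ is a submersion, so $\psi(t,q') = t\, a(t,q')$ with $a$ smooth and $a(0,q') > 0$; this is where the "no characteristic points" and submersion hypotheses combine — the defining function $\psi$ vanishes exactly to first order along $\nabla\delta$. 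Then $\log\psi = \log t + \log a$ with $\log a$ smooth up to $t=0$, so
\begin{equation}
\theta(t,q') = -\frac{k}{2}\log t + b(t,q'), \qquad b := -\frac{k}{2}\log a + \theta_\varpi \text{ smooth up to } t = 0.
\end{equation}

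Now I compute $\Veff = (\partial_t\theta)^2 + \partial_t^2\theta$ from \eqref{eq:Veff}. We get $\partial_t\theta = -\tfrac{k}{2t} + \partial_t b$ and $\partial_t^2\theta = \tfrac{k}{2t^2} + \partial_t^2 b$, hence
\begin{equation}
\Veff = \frac{k^2}{4t^2} - \frac{k}{t}\partial_t b + (\partial_t b)^2 + \frac{k}{2t^2} + \partial_t^2 b = \frac{k^2 + 2k}{4t^2} - \frac{k\,\partial_t b}{t} + \left((\partial_t b)^2 + \partial_t^2 b\right).
\end{equation}
Since $b$ is smooth up to $t = 0$ on the compact set $\mathcal{Z}$, the functions $\partial_t b$, $(\partial_t b)^2 + \partial_t^2 b$ are bounded near $\mathcal{Z}$, say by a constant $C$; and $k^2 + 2k = (k+1)^2 - 1 \geq 3$ for every $k \geq 1$. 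Therefore, for $0 < t = \delta \leq \varepsilon$ with $\varepsilon$ small,
\begin{equation}
\Veff \geq \frac{3}{4\delta^2} - \frac{kC}{\delta} - C \geq \frac{3}{4\delta^2} - \frac{\kappa}{\delta}
\end{equation}
for a suitable $\kappa \geq 0$ (absorbing the bounded term $-C$ into $-\kappa/\delta$ on $0 < \delta \leq \varepsilon$). By compactness of $\mathcal{Z}$ we cover it by finitely many such neighborhoods $\mathcal{O}$ and take the worst constants, obtaining \eqref{eq:cond} globally near $\mathcal{Z}$. The hypotheses of Theorem~\ref{thm:main} are then all satisfied, and the conclusion — essential self-adjointness of $\Delta_{\mathcal{P}}$ on $C_c^\infty(M)$, and compactness of the resolvent when $M$ is relatively compact — follows immediately.

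The main obstacle, and the only genuinely non-routine step, is establishing that $\psi$ vanishes to exactly first order in the normal direction $\partial_t$, i.e.\ that $\theta$ has the precise logarithmic singularity $-\tfrac{k}{2}\log t + (\text{smooth})$ rather than something milder or wilder; this rests on Proposition~\ref{prop:smoothness} (smoothness of the tubular coordinate change up to $\mathcal{Z}$) together with $\psi$ being a submersion with $\{\psi = 0\} = \mathcal{Z}$. One should be slightly careful that the diffeomorphism $F$ of Proposition~\ref{prop:smoothness} is only between $M_\varepsilon$ and $(0,\varepsilon) \times X_\varepsilon$ — but as noted in the remark after Proposition~\ref{prop:smoothness}, it extends to a neighborhood of the zero section, so $\theta_\varpi$ and $a$ are indeed smooth up to $t = 0$. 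Everything else is the direct computation above and an invocation of Theorem~\ref{thm:main}.
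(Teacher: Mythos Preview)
Your proposal is correct and follows essentially the same route as the paper: both arguments use the tubular coordinates from Proposition~\ref{prop:smoothness} (the paper modifies that proof slightly to obtain two-sided coordinates $(t,x)\in(-\varepsilon,\varepsilon)\times\R^{n-1}$, while you use the one-sided version and invoke the extension of $F$ to $t=0$), observe that the submersion property forces $\psi = t\cdot(\text{smooth nonvanishing})$ and hence $\rho = t^k\phi$, compute $\Veff = \tfrac{k(k+2)}{4t^2} + O(1/t)$, note $k(k+2)\geq 3$ for $k\geq 1$, and conclude by compactness of $\mathcal{Z}$ and Theorem~\ref{thm:main}. The only cosmetic point is that you should write $\log|\psi|$ rather than $\log\psi$ (or, as the paper does, work directly with $\rho = t^k\phi$), since the sign of $\psi$ is not controlled when $k$ is even.
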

\begin{proof}
We start by noticing that the proof of Claim iii) in Proposition \ref{prop:smoothness} can be modified in such a way that, for any $q\in\mathcal Z$, we construct  local coordinates $(t,x)\in(-\varepsilon,\varepsilon)\times \R^{n-1}$, defined in a neighborhood $\mathcal O\subset N$ of $q$, with respect to which we have
\begin{equation}\label{eq:local_diffeo}
\mathcal Z\cap \mathcal O=\{t=0\},\qquad
\delta(t,x)=t,\qquad \nabla \delta (t,x)=\partial_t.
\end{equation}
In fact, given a coordinate neighborhood $V\subseteq \mathcal Z$ around $q$ we can choose $\lambda:V\to A\mathcal Z$ to be a smooth non-vanishing local section of the annihilator bundle $A\mathcal Z$ defined in \eqref{eq:annihil} with constant Hamiltonian equal to $1/2$. Then, the map $E(x,t\lambda(x))$ is a smooth diffeomorphism satisfying \eqref{eq:local_diffeo}, where $E$ is defined as in \eqref{eq:E}.

Let now $\varpi$ be a smooth measure on $N$ and consider the  function $\rho$ defined in \eqref{eq:rho}. By assumption, we have $\rho=\psi^k$, for a smooth submersion $\psi$. Thus, since in the coordinates just defined we have $\rho(0,x)=0$, we must also have $\partial_t\psi(0,x)\neq 0$. This implies $\rho = t^k\phi(t,x)$ for a smooth never vanishing function. Notice that the expression of $\phi$ depends on the choice of the reference measure $\varpi$, but the fact that $\phi$ never vanishes does not depend on this choice.
We compute the effective potential as
\begin{align}
\Veff|_{\mathcal{O} \setminus \mathcal{Z}}  &=  \left(\frac{\Delta |t|}{2}\right)^2 + \partial_t\left(\frac{\Delta |t|}{2}\right) \\
& = \frac{k(k+2)}{4t^2} + \frac{k^2}{2|t|}\frac{\partial_t \phi(t,x)}{\phi(t,x)} + \frac{k(k+2)}{4}\frac{\partial_t \phi(t,x)^2}{\phi(t,x)^2} - \frac{k}{2}\frac{\partial_t^2 \phi(t,x)}{\phi(t,x)}.
\end{align}
Up to restricting to a smaller, compact subset $\mathcal O'\simeq[-\varepsilon',\varepsilon']\times[-1,1]^{n-1}$, we get the estimate ${\Veff}|_{\mathcal{ O}'\setminus \mathcal Z}\geq 3/(4t^2)-\kappa'/|t|$ for some constant $\kappa'>0$. 
By compactness of $\mathcal Z$, and up to choosing a sufficiently  small
$\varepsilon$, we can cover $M_{\varepsilon}=\{0<\delta<\varepsilon\}$ with a finite number of coordinate neighborhoods $\mathcal{O}'$ and we obtain the global estimate $\Veff\geq 3/(4\delta^2)- \kappa/\delta$ on $M_\varepsilon$. 
We conclude by applying Theorem \ref{thm:main}.
\end{proof}

\begin{rmk}
The compactness of $\mathcal Z$, used to produce uniform lower bounds for the $V_{\mathrm{eff}}$, is not a necessary condition. For instance, the singular regions of Martinet-type structures of Example~\ref{ex:martinet} are not compact. Nonetheless, the $k$-Martinet structures are Popp-regular and, as we have seen, Theorem \ref{thm:main} still yields the essential self-adjointness of $\Delta_\mathcal{P}$.
\end{rmk}

We generalize Example 7.2 in \cite{quantum-confinement}, showing a family of non\--Popp\--regular sub\--Rie\-mann\-ian structures to which Theorem \ref{thm:main} might apply or not.
\begin{example}[non-Popp-regular sub-Riemannian structure]
Consider the sub-Riemannian structure on $\R^4$ given by the following generating family of vector fields:
\begin{equation}
X_1=\partial_1+x_3\partial_4,\qquad X_2=x_1(x_1^{2\ell}+x_2^2)\partial_2,\qquad X_3=\partial_3.
\end{equation}
The singular region is $\mathcal Z=\{x_1=0\}$.
The following set of vector fields is an adapted frame on $\R^4\setminus\mathcal Z$.
\begin{equation}
\underbrace{X_1,\ X_2,\ X_3}_{\mathcal D^1},\qquad \underbrace{X_4=[X_3,X_1]=\partial_4}_{\mathcal D^2/\mathcal D^1}.
\end{equation}
Using formula \eqref{eq:popp}, we have the following expression for Popp's measure
\begin{equation}
\mathcal P=\frac{1}{\sqrt{2}x_1(x_1^{2\ell}+x_2^2)}dx_1\wedge dx_2\wedge dx_3 \wedge dx_4,
\end{equation}
or, equivalently, $\mathcal P=x_1^{a(x)}e^{2\varphi(x)}dx_1\wedge dx_2\wedge dx_3 \wedge dx_4$, where
\begin{equation} \label{eq:a}
a(x)=\begin{cases}
-(2\ell+1) & x_2=0,\\
-1 &  x_2\neq0,\\
\end{cases}
\qquad
\varphi(x)=\begin{cases}
-\frac{1}{2}\log{\sqrt{2}} & x_2=0, \\
-\frac{1}{2}\log\left(\sqrt{2}(x_1^{2\ell}+x_2^2)\right) & x_2\neq 0.\\
\end{cases}
\end{equation}
Noticing that $\delta(x_1,x_2,x_3,x_4)=x_1$, the effective potential reads
\begin{equation}
\Veff=\frac{a(x)(a(x)-2)}{4x_1^2}+R(x),\ \text{with }R(x)=\frac{a(x)} {x_1}\partial_1\varphi(x)+(\partial_1\varphi(x))^2+\partial_1^2\varphi(x).
\end{equation}
We have
\begin{equation}\label{eq:R}
R(x)=\begin{cases}
0 & x_2=0,\\
\frac{\ell x_1^{2\ell-2}}{(x_1^{2\ell+x_2^2})^2}\left[(\ell+2)x_1^{2\ell}+(2-2\ell)x_2^2\right] & x_2\neq0.
\end{cases}
\end{equation}
Combining \eqref{eq:a}-\eqref{eq:R} we deduce that $\Veff= 3/(4x_1^2)+R(x)$ if $x_2\neq 0$, and it is easy to see that
the behavior of $R(x)$ depends on the choice of the parameter $\ell$. In particular, if $\ell=1$, $R(x)\geq 0$ and we deduce essential self-adjointness of $\Delta=\Delta_{\mathcal P}$ by Theorem \ref{thm:main}. On the other hand, if $\ell>1$, along any sequence $x^i=(1/i,1/i,0,0)$, we have $x_1^i R(x_i) \to -\infty$. Hence, we cannot apply Theorem \ref{thm:main}. 
\end{example}

We show an example of a non-Popp regular sub-Riemannian structure, to which Theorem \ref{thm:main} applies only on one connected component of $N\setminus \mathcal Z$. Indeed, the sub-Laplacian $\Delta_{\mathcal{P}}$ is essentially self-adjoint on one connected component and not on the other one.

\begin{example}
Let $f:\R\to\R$ be the function
\begin{equation}
f(t)=\begin{cases}
\sqrt{2}e^{-\frac{1}{t^2}}&\text{for } t>0,\\
0&\text{for } t\leq 0,
\end{cases}
\end{equation}
and consider the sub-Riemannian structure on $\R^3$ given by 
the global generating family:
\begin{equation}
X_1=\partial_1,\qquad X_2=\partial_2+x_1\partial_3,\qquad X_3=f(x_1)\partial_3,
\end{equation}
where $x=(x_1,x_2,x_3)$ denote the coordinates in $\R^3$ and $\partial_i$ denotes the derivative with respect to the $i$-th coordinate. The singular region is the set $\mathcal Z=\{ x_1=0\}$. Observe that, on  $\mathcal R_-=\{x_1<0\}$, this is the Heisenberg sub-Riemannian structure on $\R^3$, while, for $\mathcal R_+=\{x_1>0\}$, this is a Riemannian structure. In particular, using the explicit formula  \eqref{eq:popp}, we obtain that the Popp's measure $\mathcal P$ is
\begin{equation}
\mathcal P=\frac{e^{2\theta}}{\sqrt 2}dx_1\wedge dx_2\wedge dx_3,\qquad\text{where}\qquad
\theta(x_1)=\begin{cases}
0&\text{if }x_1<0,\\
\frac{1}{2x_1^2}&\text{otherwise}.
\end{cases}
\end{equation}

Although the above computation shows that the function $\rho$ defined in \eqref{eq:rho} is not a submersion, we can nevertheless compute the effective potential on both sides of the singular region, exploiting the fact that the distance from the singular region is $\delta(x)=|x_1|$. (Here, the reference measure $\varpi$ is taken to be the Lebesgue measure.)

On $\mathcal R_+$ we have $\Veff=(\partial_1\theta)^2+\partial_1^2\theta\sim 1/x_1^6$, which is greater than $3/(4x_1^2)$ in an uniform neighborhood of $\mathcal Z\cap \mathcal R_+$, leading to essential self-adjointness of $\Delta_{\mathcal{P}}$ defined on $C^\infty_c(\mathcal R_+)$. On the other hand, on $\mathcal R_-$ we have $\Veff\equiv 0$, and Theorem \ref{thm:main} does not apply. 
One can check that $\Delta_{\mathcal {P}}$ is not essentially self-adjoint on $C^\infty_c(\mathcal R_-)$ by, e.g., applying a Fourier transform on the $(x_2,x_3)$ variables and analysing the resulting one-dimensional operator.
\end{example}

\section*{Acknowledgments}
\thanks{This research has  been supported by the Grant ANR-15-CE40-0018 of the ANR, by the iCODE institute (research project of the Idex Paris-Saclay). The first author has been partially supported by the GNAMPA Indam project ``Problemi nonlocali e degeneri nello spazio euclideo'' and by ``Fondazione Ing.~Aldo Gini'', Universit\`a degli Studi di Padova. This research benefited from the support of the ``FMJH Program Gaspard Monge in optimization and operation research'' and from the support to this program from EDF. This work has been partially supported by the ANR project ANR-15-IDEX-02. A proceeding version of this paper appeared in \cite{SRQC-actes}, whose last section contains also some remarks on the difficulties arising in presence of tangency points on the singular region.}

\bibliographystyle{abbrv}
\bibliography{ARSelfAdjointness}

\end{document}